\newcommand{\cart}{\textup{cart}}
\newcommand{\cocart}{\textup{cocart}}
\newcommand{\dec}{\star}
\newcommand{\Funp}{\Fun_{**}}
\newcommand{\Funpdec}{\Fun_{**}^\dec}
\newcommand{\uq}{\textup{u.q.}}
\begin{document}

\title{Quillen adjunctions induce adjunctions of quasicategories}
\date{\today}
\author{\textsc{Aaron Mazel-Gee}}
\maketitle

\begin{abstract}
We prove that a Quillen adjunction of model categories (of which we do not require functorial factorizations and of which we only require finite bicompleteness) induces a canonical adjunction of underlying quasicategories.
\end{abstract}

\setcounter{tocdepth}{1}
\tableofcontents

\setcounter{section}{-1}

\section{Introduction}

\subsection*{Background and motivation}

Broadly speaking, the methods of abstract homotopy theory can be divided into two types: those that work \textit{internally} to a given homotopy theory, and those that work \textit{externally} with all homotopy theories at once.  By far the most prominent method of the first type is the theory of model categories, introduced by Quillen in his seminal work \cite{QuillenHA}.  On the other hand, there are now a plethora of models for ``the homotopy theory of homotopy theories'', all of them equivalent in an essentially unique manner (reviewed briefly in \cref{section conventions}); for the moment, we will refer to such objects collectively as ``$\infty$-categories''.

However, there is some apparent overlap between these two situations: model categories do not exist in isolation, but can be related by Quillen adjunctions and Quillen equivalences.  We are thus led to a natural question.

\begin{qn}\label{main question}
Precisely what $\infty$-categorical phenomena do Quillen adjunctions and Quillen equivalences encode?
\end{qn}

Of course, one expects that Quillen adjunctions induce ``adjunctions of $\infty$-categories'' and that Quillen equivalences induce ``equivalences of $\infty$-categories''.  However, it turns out that actually making these statements precise i a subtle task.  On the other hand, it is made easier by imposing various additional assumptions or by settling for more modest conclusions, and hence there already exist an assortment of partial results in this direction.  We defer a full history to \cref{section history}; the state of affairs can be summarized as follows.
\begin{itemizesmall}
\item Quillen equivalences are known to induce weak equivalences of $s\Set$-enriched categories (where $s\Set$ denotes the category of simplicial sets, and by ``weak equivalence'' we mean in the Bergner model structure).
\item Quillen adjunctions are known to induce adjunctions of homotopy categories, and are more-or-less known to induce adjunctions of $\ho(s\Set_\KQ)$-enriched homotopy categories (where $s\Set_\KQ$ denotes the category of simplicial sets equipped with the standard Kan--Quillen model structure).
\item Quillen adjunctions between model categories that admit suitable co/fibrant replacement functors are more-or-less known to induce adjunctions of quasicategories.
\item Simplicial Quillen adjunctions between simplicial model categories are known to induce adjunctions of quasicategories, and moreover certain Quillen adjunctions can be replaced by simplicial Quillen adjunctions of simplicial model categories.
\end{itemizesmall}

Thus, in order to fully unify the internal and external approaches to abstract homotopy theory, it remains to show that an arbitrary Quillen adjunction induces an adjunction of $\infty$-categories.  The purpose of the present paper is to prove this assertion when we take the term ``$\infty$-category'' to mean ``quasicategory''. 

Since model categories have figured so foundationally into much of the development of axiomatic homotopy theory, it seems that Quillen adjunctions are generally viewed as such basic and fundamental objects that they hardly merit further interrogation.  However, inasmuch as there is a far deeper understanding today of ``the homotopy theory of homotopy theories'' than existed in 1967, we consider it to be a worthwhile endeavor to settle this matter once and for all.

\begin{rem}
An adjunction of $s\Set$-enriched categories induces an adjunction of quasicategories (see \cite[Corollary 5.2.4.5]{LurieHTT}), but the converse is presumably false: an adjunction of $s\Set$-enriched categories is by its very nature extremely rigid -- making reference to simplicial sets up to isomorphism, with no mention of their ambient model structure --, whereas an adjunction of quasicategories is a much more flexible notion and incorporates a wealth of higher coherence data.  (Of course, both of these notions are strictly stronger than that of an adjunction of $\ho(s\Set_\KQ)$-enriched categories.)

In fact, the datum of ``an adjunction of quasicategories'' only specifies the actual adjoint functors themselves up to contractible spaces of choices.\footnote{We refer the reader to \cite[\sec 5.2]{LurieHTT} for a thorough exposition of the theory of adjunctions of quasicategories.}  This situation may appear somewhat abstruse to those not familiar with the theory of quasicategories, but in our view, quasicategories were never really meant to be worked with at the simplex-by-simplex level anyways: they function best when manipulated via (quasicategorical) universal properties, the praxis of which is actually quite similar to that of 1-categories in many ways.  So, all in all, we view this situation primarily as a reaffirmation of the philosophy of quasicategories: that it's too much to demand strict composition in the first place, and that working with rigid models can obscure the essential features of the true and underlying mathematics.
\end{rem}

\subsection*{Acknowledgments}

We cordially thank Zhen Lin Low for a lively and extended discussion regarding the material presented in this paper, as well as Dave Carchedi, Bill Dwyer, Geoffroy Horel, Tyler Lawson, Thomas Nikolaus, and Emily Riehl for their helpful input.  We also gratefully acknowledge the financial support provided by UC Berkeley's geometry and topology RTG (grant DMS-0838703) during the time that this work was carried out.

\section{Notation and conventions}\label{section conventions}

\subsection{Specific categories}

As we have already indicated, we write $s\Set$ for the category of simplicial sets.  Of course, this is because we write $\Set$ for the category of sets and we write $c(-)$ and $s(-)$ for categories of co/simplicial objects; hence, we will write $ss\Set$ for the category of bisimplicial sets.  We also write $\Cat$ for the category of categories, $\RelCat$ for the category of relative categories, and $\Cat_{s\Set}$ for the category of $s\Set$-enriched categories.  We will write $\Nerve : \Cat \ra s\Set$ for the usual nerve functor.

We will consider categories as special instances of both relative categories and $s\Set$-enriched categories: for the former we consider $\Cat \subset \RelCat$ by endowing our categories with the \textit{minimal} relative structure (in which only the identity maps are marked as weak equivalences), and for the latter we consider $\Cat \subset \Cat_{s\Set}$ via the inclusion $\Set \subset s\Set$ of sets as discrete simplicial sets.

\subsection{Specific model categories}

As we have already indicated, we will model ``spaces'' using the standard \bit{Kan--Quillen model structure} $s\Set_\KQ$, while to model ``the homotopy theory of homotopy theories'', we will make use of all four of
\begin{itemizesmall}
\item the \bit{Rezk model structure} (a/k/a the ``complete Segal space'' model structure) $ss\Set_\Rezk$ of \cite[Theorem 7.2]{RezkCSS},
\item the \bit{Barwick--Kan model structure} $\RelCat_\BarKan$ of \cite[Theorem 6.1]{BK-relcats},
\item the \bit{Bergner model structure} $(\Cat_{s\Set})_\Bergner$ of \cite[Theorem 1.1]{Bergner}, and
\item the \bit{Joyal model structure} $s\Set_\Joyal$ of \cite[Theorem 2.2.5.1]{LurieHTT}.
\end{itemizesmall}
As explained in \cite{BSP}, these are all equivalent in an essentially unique way, though the meanings of the phrases ``equivalent'' and ``essentially unique'' here are both slightly subtle.

We will use the following equivalences between these models for ``the homotopy theory of homotopy theories''.
\begin{itemize}

\item The Barwick--Kan model structure is defined by lifting the cofibrantly generated model structure $ss\Set_\Rezk$ along an adjunction $ss\Set \adjarr \RelCat$ (so that the right adjoint creates the fibrations and weak equivalences), which then becomes a Quillen equivalence (see \cite[Theorem 6.1]{BK-relcats}).  Then, the \bit{Rezk nerve} functor $\NerveRezk : \RelCat \ra ss\Set$ of \cite[3.3]{RezkCSS} (there called the ``classification diagram'' functor) admits a natural weak equivalence in $s(s\Set_\KQ)_\Reedy$ to this right Quillen equivalence (see \cite[Lemma 5.4]{BK-relcats}).  Thus, in light of the left Bousfield localization $s(s\Set_\KQ)_\Reedy \adjarr ss\Set_\Rezk$, we see that the Rezk nerve defines a relative functor $\NerveRezk : \RelCat_\BarKan \ra ss\Set_\Rezk$ which creates the weak equivalences in $\RelCat_\BarKan$.

For any relative category $(\R,\bW_\R)$, we will write $\Fun([n],\R)^{\bW_\R}$ for the wide subcategory on the levelwise weak equivalences in $\Fun([n],\R)$, the nerve of which is precisely $\NerveRezk(\R,\bW_\R)_n$.

\item The \bit{hammock localization} functor $\ham : \RelCat \ra \Cat_{s\Set}$ of \cite[2.1]{DKCalc} defines a weak equivalence in $\RelCat_\BarKan$ on the underlying relative categories of the model categories $\RelCat_\BarKan$ and $(\Cat_{s\Set})_\Bergner$ (see \cite[Theorem 1.7]{BK-simploc}).

\item The \bit{homotopy-coherent nerve} functor $\Nervehc : \Cat_{s\Set} \ra s\Set$ of \cite[Definition 1.1.5.5]{LurieHTT} (there called the ``simplicial nerve'' functor, originally defined in \cite{Cordier}, there called the ``nerf homotopiquement coh\'{e}rent'' functor) defines a right Quillen equivalence $(\Cat_{s\Set})_\Bergner \ra s\Set_\Joyal$ (see \cite[Theorem 2.2.5.1]{LurieHTT}).

\end{itemize}

Since the model category $(\Cat_{s\Set})_\Bergner$ is cofibrantly generated, it comes naturally equipped with a fibrant replacement functor.  However, it will be convenient for us to use one which does not change the objects.  Thus, for definiteness we define $\bbR_\Bergner : (\Cat_{s\Set})_\Bergner \ra (\Cat_{s\Set})_\Bergner$ to be the functor given by applying Kan's $\Ex^\infty$ functor locally, i.e.\! to each hom-object.  (Note that $\Ex^\infty$ preserves finite products, being a filtered colimit of right adjoints.)  We now define the \bit{underlying quasicategory} functor to be the composite
\[ \uq : \RelCat \xra{\ham} \Cat_{s\Set} \xra{\bbR_\Bergner} \Cat_{s\Set} \xra{\Nervehc} s\Set . \]
As $\Nervehc$ is a right Quillen functor, this does indeed take values in quasicategories (and defines a relative functor $\RelCat_\BarKan \ra s\Set_\Joyal$).

\subsection{General model categories}

A model category $\C$ comes equipped with various attendant subcategories, for which we must fix some notation.  We will write
\begin{itemizesmall}
\item $\bW_\C \subset \C$ for the subcategory of weak equivalences,
\item $\C^c , \C^f,\C^{cf} \subset \C$ for the full subcategories of cofibrant, fibrant, and bifibrant objects, respectively,
\item $\bW^c_\C = \C^c \cap \bW_\C \subset \C$ and $\bW^f_\C = \C^f \cap \bW_\C \subset \C$,
\end{itemizesmall}
and similarly for other model categories.  We will use the arrows $\cofibn$ and $\fibn$ to denote cofibrations and fibrations, respectively, and we will decorate an arrow with the symbol $\approx$ to denote that it is a weak equivalence.

\subsection{Foundations}

Throughout, we will ignore all set-theoretic issues.  These are irrelevant to our aims, and in any case they can be dispensed with by appealing to the usual device of \textit{Grothendieck universes} (see \cite[\sec 1.2.15]{LurieHTT}).

\section{The main theorem}\label{section main theorem}

Let $F : \C \adjarr \D : G$ be a Quillen adjunction of model categories.  Note that the functors $F$ and $G$ do not generally define functors of underlying relative categories: they do not generally preserve weak equivalences.  Nevertheless, all is not lost: the inclusions $(\C^c,\bW^c_\C) \hookra (\C,\bW_\C)$ and $(\D^f,\bW^f_\D) \hookra (\D,\bW_\D)$ are weak equivalences in $\RelCat_\BarKan$ (as is proved in \cref{inclusion of co/fibrants induces BK weak equivalence}), and moreover by Kenny Brown's lemma (or rather its immediate consequence \cite[Corollary 7.7.2]{Hirsch}), the composites
\[ F^c : \C^c \hookra \C \xra{F} \D \]
and
\[ \C \xla{G} \D \hookla \D^f : G^f \]
do preserve weak equivalences.  Of course, this presents a problem: these two functors no longer have opposite sources and targets!  Despite this, we have the following theorem, which is the main result of this paper.

\begin{thm}\label{main theorem}
The functors $F^c$ and $G^f$ induce a canonical adjunction between the underlying quasicategories $\uq(\C)$ and $\uq(\D)$, informally denoted by $\uq(F^c) : \uq(\C) \adjarr \uq(\D) : \uq(G^f)$.
\end{thm}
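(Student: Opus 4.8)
The plan is to produce the adjunction at the level of quasicategories by exhibiting, for both composites $F^c$ and $G^f$, a single relative functor that simultaneously computes both the derived functor and its adjoint — the standard trick being to work not with $\C^c$ and $\D^f$ separately but with the relative categories of arrows equipped with levelwise weak equivalences, or equivalently to invoke the deformation-retraction picture of \cite{DKCalc}. Concretely, first I would show that the inclusions $(\C^c, \bW^c_\C) \hookra (\C, \bW_\C)$ and $(\D^f, \bW^f_\D) \hookra (\D, \bW_\D)$ are weak equivalences in $\RelCat_\BarKan$ (this is cited above as \cref{inclusion of co/fibrants induces BK weak equivalence}), so that upon applying the underlying-quasicategory functor $\uq = \Nervehc \circ \bbR_\Bergner \circ \ham$ we get equivalences of quasicategories $\uq(\C^c) \xra{\approx} \uq(\C)$ and $\uq(\D^f) \xra{\approx} \uq(\D)$. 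Thus it suffices to build a canonical adjunction $\uq(\C^c) \adjarr \uq(\D^f)$, and to check it is carried by $F^c$ and $G^f$.

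Next I would observe that $F^c$ does not land in $\D^f$, so I cannot directly feed it through $\uq$; instead I would pass through a \emph{third} relative category that receives both sides. The natural candidate is $(\C^c, \bW^c_\C)$ mapped to $(\D, \bW_\D)$ via $F^c$ together with the fibrant-replacement comparison, or — more symmetrically — the relative category $\Funp^\dec(\cdot)$ of suitably marked arrows built from a chosen fibrant replacement in $\D$ and cofibrant replacement in $\C$. Here I would want to invoke the theory of \emph{homotopical} (co)fibrant replacement as a deformation retraction of relative categories: the hammock localization $\ham$ sends a homotopical deformation retraction to a Dwyer–Kan equivalence (\cite[2.1]{DKCalc}, \cite[Theorem 1.7]{BK-simploc}), and hence $\uq$ sends it to an equivalence of quasicategories. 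The key point is that, after localizing, $F^c$ and $G^f$ become genuinely adjoint: the unit and counit of the Quillen adjunction, restricted to co/fibrant objects and composed with the replacement comparison maps, are weak equivalences (by Ken Brown's lemma, exactly as in the setup above), and these natural weak equivalences descend to natural transformations in the quasicategories $\uq(\C)$ and $\uq(\D)$ witnessing the triangle identities up to coherent homotopy.

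The structure of the argument is therefore: (1) replace $\C$ and $\D$ by $\C^c$ and $\D^f$ up to equivalence of quasicategories; (2) construct an auxiliary relative category $\E$ with relative functors to (a model of) each of $\uq(\C)$ and $\uq(\D)$ inducing equivalences, and with respect to which $F^c$ and $G^f$ are honestly represented; (3) verify that the derived unit and counit, assembled from the Quillen-adjunction unit/counit and the replacement comparisons, are levelwise weak equivalences, hence become equivalences in the relevant quasicategories; and (4) assemble these into an adjunction of quasicategories, e.g.\ by appealing to \cite[\sec 5.2]{LurieHTT} — it suffices to produce a unit natural transformation whose components are equivalences after passing to the appropriate localized mapping objects, which is precisely what Ken Brown's lemma guarantees.

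The main obstacle I anticipate is step (2)–(4): making the homotopies coherent. At the 1-categorical and homotopy-category level everything is classical, but an adjunction of quasicategories is genuinely more data, and the difficulty is that $F$ and $G$ do not preserve weak equivalences on the nose, so one cannot simply apply a localization functor to a 1-categorical adjunction. The crux is to realize the correction — fibrant/cofibrant replacement — as a \emph{deformation} in the precise homotopical sense (so that $\ham$, and hence $\uq$, inverts it), and then to check that the resulting zig-zag of relative functors can be straightened into a single adjunction rather than merely a morphism of spans; this is where the finite-bicompleteness hypothesis and the absence of functorial factorizations will have to be handled carefully, presumably by choosing replacements only on the relevant (co)fibrant subcategories where Ken Brown's lemma applies, rather than globally.
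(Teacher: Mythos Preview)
Your proposal contains a genuine error and, separately, takes an approach that the paper deliberately avoids because it cannot be made to work under the stated hypotheses.

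The error is in step (3): you assert that the derived unit and counit ``are levelwise weak equivalences'' and that ``Ken Brown's lemma guarantees'' this. That is false for a general Quillen adjunction. The derived unit $x \to G(\bbR^\D F(x))$ (for $x \in \C^c$) is a weak equivalence precisely when the left derived functor is fully faithful; requiring both derived unit and counit to be weak equivalences is the definition of a Quillen \emph{equivalence}. Ken Brown's lemma only tells you that $F^c$ and $G^f$ preserve weak equivalences between co/fibrant objects --- it says nothing about the unit or counit. What you actually need (cf.\ \cite[Proposition 5.2.2.8]{LurieHTT}) is that the unit induce equivalences of mapping spaces $\hom_{\uq(\C)}(x,G(y)) \simeq \hom_{\uq(\D)}(F(x),y)$, which is a different and weaker statement than the components being equivalences.

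Even with that corrected, your strategy of producing a unit transformation runs into the obstacle the paper is written to circumvent. Without functorial factorizations there is no cofibrant replacement \emph{functor} $\bbQ^\C$ or fibrant replacement \emph{functor} $\bbR^\D$, hence no global natural transformation $\id \Rightarrow G\bbR^\D F\bbQ^\C$ to feed into \cite[Proposition 5.2.2.8]{LurieHTT}. Moreover, as the paper notes, $\ham$ does not preserve natural transformations, so even a 1-categorical unit would not pass directly to the quasicategorical level. Your ``deformation retraction'' language presupposes exactly the functorial data that is being disclaimed.

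The paper's approach is structurally different: rather than exhibit a unit, it realizes the adjunction as a single object over $\Delta^1$. It forms the \emph{collage} relative category $(\C^c + \D^f)$, whose hom-sets across the bridge are $\hom_\C(x,G(y)) \cong \hom_\D(F(x),y)$, applies $\ham$, and then proves (\cref{main ingredient}) that $\ham(\C^c + \D^f)$ is Bergner-equivalent both to an explicit cocartesian-fibration model $\cocart(\ham(F^c))$ encoding $F^c$ and to an explicit cartesian-fibration model $\cart(\ham(G^f))$ encoding $G^f$. Invoking \cite[Proposition 5.2.4.4]{LurieHTT}, the resulting map $\uq(\C^c + \D^f) \to \Delta^1$ is then simultaneously a cocartesian and a cartesian fibration --- which is precisely the data of an adjunction of quasicategories --- with no replacement functors or unit transformations required.
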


Recall that an adjunction of quasicategories is precisely a map $\M \to \Delta^1$ which is both a cocartesian fibration and a cartesian fibration: the left adjoint is then its unstraightening $\M_0 \ra \M_1$ as a \textit{cocartesian} fibration, while the right adjoint is its unstraightening $\M_0 \la \M_1$ as a \textit{cartesian} fibration.  Thus, the first step in proving \cref{main theorem} is to obtain a cocartesian fibration over $\Delta^1$ which models $F^c$ and a cartesian fibration over $\Delta^1$ which models $G^f$.  We will actually define these as $s\Set$-enriched categories over $[1]$, relying on a recognition result (\cite[Proposition 5.2.4.4]{LurieHTT}) to deduce that these induce co/cartesian fibrations of quasicategories over $\Delta^1$.

\begin{constr}\label{construct co/cartesian fibrations}
We define the object $\cocart(\ham(F^c)) \in (\Cat_{s\Set})_{/[1]}$ as follows:
\begin{itemize}
\item the fiber over $0 \in [1]$ is $\ham(\C^c)$;
\item the fiber over $1 \in [1]$ is $\ham(\D)$;
\item for any $x \in \ham(\C^c)$ and any $y \in \ham(\D)$, we set $\hom_{\cocart(\ham(F^c))}(y,x) = \es$ and
\[ \hom_{\cocart(\ham(F^c))}(x,y) = \hom_{\ham(\D)}(F^c(x),y) . \]

\end{itemize}
Composition within the fibers is immediate, and is otherwise given by composition in $\ham(\D)$.

Similarly, we define the object $\cart(\ham(G^f)) \in (\Cat_{s\Set})_{/[1]}$ as follows:
\begin{itemize}
\item the fiber over $0 \in [1]$ is $\ham(\C)$;
\item the fiber over $1 \in [1]$ is $\ham(\D^f)$;
\item for any $x \in \ham(\C)$ and any $y \in \ham(\D^f)$, we set $\hom_{\cart(\ham(G^f))}(y,x) = \es$ and
\[ \hom_{\cart(\ham(G^f))}(x,y) = \hom_{\ham(\C)}(x,G^f(y)) . \]
\end{itemize}
Again composition within the fibers is immediate, but this time it is otherwise given by composition in $\ham(\C)$.
\end{constr}

Now, it is actually not so hard to show (using \cite[Proposition 5.2.4.4]{LurieHTT}) that $\cocart(\ham(F^c))$ and $\cart(\ham(G^f))$ each give rise to an adjunction of quasicategories.\footnote{Used in such a way, \cite[Proposition 5.2.4.4]{LurieHTT} becomes a quasicategorical analog of the dual of \cite[Chapter IV, \sec 1, Theorem 2(ii)]{MacLaneCWM}: given a functor $F_1 : \C_1 \ra \C_2$ of small 1-categories, a right adjoint is freely generated by choices, for all $c_2 \in \C_2$, of objects $F_2(c_2) \in \C_1$ and morphisms $F_1(F_2(c_2)) \ra c_2$ in $\C_2$ inducing natural isomorphisms $\hom_{\C_1}(-,F_2(c_2)) \xra{\cong} \hom_{\C_2}(F_1(-),c_2)$.  (Of course, both the category of right adjoints to $F_1$ and the category of such data are $(-1)$-connected groupoids in any case; we are making the evil assertion that they are actually \textit{equal}.)}  However, a priori, such an argument might give two possibly different adjunctions!  In order to show that they actually agree, we introduce the following intermediate object.

\begin{constr}\label{construct collage}
We define the object $(\C^c + \D^f,\bW_{\C^c + \D^f}) \in \RelCat_{/[1]}$ as follows:
\begin{itemize}

\item the fiber over $0 \in [1]$ is $(\C^c,\bW_\C^c)$;

\item the fiber over $1 \in [1]$ is $(\D^f,\bW_\D^f)$;

\item for any $x \in \C^c$ and any $y \in \D^f$, we set $\hom_{\C^c + \D^f}(y,x) = \es$ and
\[ \hom_{\C^c + \D^f}(x,y) = \hom_\C(x,G (y)) \cong \hom_\D(F (x) , y) , \]
declaring none of these maps to be weak equivalences.

\end{itemize}
Composition within fibers is immediate, and is otherwise given by composition in either $\C^c$ or $\D^f$, whichever contains two of the three objects involved.  We will depict arrows living over the unique non-identity map in $[1]$ by
\[ \begin{tikzcd}
x \arrow[rightsquigarrow]{r} & y,
\end{tikzcd} \]
and we will refer to such arrows as \bit{bridge arrows}.
\end{constr}

Applying the hammock localization functor to \cref{construct collage} gives rise to an object
\[ \ham(\C^c+\D^f) \in (\Cat_{s\Set})_{/[1]} , \]
and this will be what connects the two objects of \cref{construct co/cartesian fibrations}.  In order to see how this works, we must examine the $s\Set$-enriched category $\ham(\C^c+\D^f)$.  First of all, its fiber over $0 \in [1]$ is precisely $\ham(\C^c)$, while its fiber over $1 \in [1]$ is precisely $\ham(\D^f)$.  On the other hand, for $x \in \C^c$ and $y \in \D^f$, the simplicial set $\hom_{\ham(\C^c+\D^f)}(x,y)$ has as its $n$-simplices the reduced hammocks of width $n$ in the relative category $(\C^c+\D^f,\bW_{\C^c+\D^f})$; since none of the bridge arrows are weak equivalences, such a hammock must be of the form
\[ \begin{tikzcd}
& \bullet \arrow[-]{r} \arrow{d}[sloped, anchor=south]{\approx} & \cdots \arrow[-]{r} & \bullet \arrow[rightsquigarrow]{r} \arrow{d}[sloped, anchor=north]{\approx} & \bullet \arrow[-]{r} \arrow{d}[sloped, anchor=south]{\approx} & \cdots \arrow[-]{r} & \bullet \arrow{d}[sloped, anchor=north]{\approx} \\
& \bullet \arrow[-]{r} \arrow{d}[sloped, anchor=south]{\approx} & \cdots \arrow[-]{r} & \bullet \arrow[rightsquigarrow]{r} \arrow{d}[sloped, anchor=north]{\approx} & \bullet \arrow[-]{r} \arrow{d}[sloped, anchor=south]{\approx} & \cdots \arrow[-]{r} & \bullet \arrow{d}[sloped, anchor=north]{\approx} \\
x \arrow[-]{ruu} \arrow[-]{ru} \arrow[-]{rd} \arrow[-]{rdd} & \vdots \arrow{d}[sloped, anchor=south]{\approx} & & \vdots \arrow{d}[sloped, anchor=north]{\approx} & \vdots \arrow{d}[sloped, anchor=south]{\approx} & & \vdots \arrow{d}[sloped, anchor=north]{\approx} & y \arrow[-]{luu} \arrow[-]{lu} \arrow[-]{ld} \arrow[-]{ldd} \\
& \bullet \arrow[-]{r} \arrow{d}[sloped, anchor=south]{\approx} & \cdots \arrow[-]{r} & \bullet \arrow[rightsquigarrow]{r} \arrow{d}[sloped, anchor=north]{\approx} & \bullet \arrow[-]{r}  \arrow{d}[sloped, anchor=south]{\approx} & \cdots \arrow[-]{r} & \bullet \arrow{d}[sloped, anchor=north]{\approx} \\
& \bullet \arrow[-]{r} & \cdots \arrow[-]{r} & \bullet \arrow[rightsquigarrow]{r} & \bullet \arrow[-]{r} & \cdots \arrow[-]{r} & \bullet \\
\end{tikzcd} \]
(where everything to the left of the column of bridge arrows lies in $\C^c$, while everything to the right of the column of bridge arrows lies in $\D^f$).  Hence, there are two maps
\[ \hom_{\cocart(\ham(F^c))}(x,y) \la \hom_{\ham(\C^c+\D^f)}(x,y) \ra \hom_{\cart(\ham(G^f))}(x,y) , \]
in which the left-pointing arrow is obtained by applying the relative functor $\C^c \xra{F^c} \D$ to the ``left half'' of the above hammock, while the right-pointing arrow is obtained by applying the relative functor $\C \xla{G^f} \D^f$ to the ``right half'' of the above hammock.  In fact, it is not hard to see that this respects composition of hammocks, and hence we obtain a diagram
\[ \cocart(\ham(F^c)) \la \ham(\C^c+\D^f) \ra \cart(\ham(G^f)) \]
in $(\Cat_{s\Set})_{/[1]}$.

The main ingredient in the proof of \cref{main theorem} is the following result.

\begin{prop}\label{main ingredient}
The horizontal maps
\[ \begin{tikzcd}
\cocart(\ham(F^c)) \arrow{rd} & \ham(\C^c+\D^f) \arrow{l}[swap]{\approx} \arrow{r}{\approx} \arrow{d} & \cart(\ham(G^f)) \arrow{ld} \\
& {[1]}
\end{tikzcd} \]
are weak equivalences in $(\Cat_{s\Set})_\Bergner$.
\end{prop}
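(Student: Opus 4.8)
Here is how I would attack \cref{main ingredient}.

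The plan is to verify that each of the two horizontal maps satisfies the definition of a Bergner weak equivalence directly. Both maps lie over $[1]$, and in each of the three $s\Set$-enriched categories involved there are no morphisms from an object over $1$ to an object over $0$; hence the homotopy category of each is partitioned by its two fibers with no equivalence crossing between them, and a map over $[1]$ between such $s\Set$-categories is a Bergner weak equivalence precisely when (a) it restricts to a Bergner weak equivalence on the fiber over $0$, (b) the same over $1$, and (c) it induces a weak equivalence of simplicial sets on every hom-space $\hom(x,y)$ with $x$ over $0$ and $y$ over $1$. Conditions (a) and (b) are immediate: for the map landing in $\cocart(\ham(F^c))$ the fiber over $0$ is the identity of $\ham(\C^c)$, while the fiber over $1$ is $\ham$ applied to the inclusion $(\D^f,\bW^f_\D) \hookra (\D,\bW_\D)$, a weak equivalence in $\RelCat_\BarKan$ by \cref{inclusion of co/fibrants induces BK weak equivalence}, hence carried to a Bergner weak equivalence by the relative functor $\ham$; symmetrically for the map landing in $\cart(\ham(G^f))$, using $(\C^c,\bW^c_\C) \hookra (\C,\bW_\C)$. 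Finally, the two horizontal maps are interchanged by opposite-category duality -- replacing the Quillen adjunction $(F,G)$ by $(G^{\mathrm{op}},F^{\mathrm{op}})$ between $\D^{\mathrm{op}}$ and $\C^{\mathrm{op}}$, under which $\C^c + \D^f$ becomes $(\D^{\mathrm{op}})^c + (\C^{\mathrm{op}})^f$ and $\ham$ commutes with $(-)^{\mathrm{op}}$ -- so it suffices to establish condition (c) for one of them, say for the map $\hom_{\ham(\C^c+\D^f)}(x,y) \to \hom_{\ham(\D)}(F^c(x),y)$.

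Condition (c) is the heart of the matter. By \cref{construct collage} and the discussion after it, an $n$-simplex of $\hom_{\ham(\C^c+\D^f)}(x,y)$ is a reduced hammock made of a reduced hammock in $\C^c$ out of $x$, a single column of bridge arrows, and a reduced hammock in $\D^f$ into $y$, and the comparison map applies $F^c$ to the $\C^c$-part and concatenates. Read off this way, $\hom_{\ham(\C^c+\D^f)}(x,y)$ is a two-sided bar construction gluing the simplicial left module $\hom_{\ham(\C^c)}(x,-)$, the ``bridge bimodule'' $(a,b) \mapsto \hom_\D(F(a),b)$, and the simplicial right module $\hom_{\ham(\D^f)}(-,y)$. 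I would prove the comparison is a weak equivalence in two steps: first, identify the bridge bimodule -- up to weak equivalence and as a bimodule over the hammock localizations $\ham(\C^c)$ and $\ham(\D^f)$ -- with the ``representable'' bimodule $(a,b) \mapsto \hom_{\ham(\D)}\bigl(\ham(F^c)(a),\ham(i)(b)\bigr)$, where $i : \D^f \hookra \D$; since $\ham(i)$ is a Bergner weak equivalence this bimodule also computes $\hom_{\ham(\D)}(F^c(-),-)$. Second, collapse the two outer factors by the homotopy co-Yoneda lemma, arriving at $\hom_{\ham(\D)}(F^c(x),y)$ compatibly with the comparison.

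An equivalent packaging, perhaps more convenient for the bookkeeping: the collage obtained from $\C^c + \D^f$ by enlarging the fiber over $1$ to all of $(\D,\bW_\D)$ is exactly the mapping cylinder of the relative functor $F^c$, i.e.\ the pushout $\D \amalg_{\C^c} (\C^c \times [1])$ along $F^c$ and an endpoint inclusion. Since $\ham : \RelCat_\BarKan \to (\Cat_{s\Set})_\Bergner$ is a weak equivalence of relative categories, hence an equivalence of homotopy theories, it preserves homotopy colimits; so -- once this pushout is seen to be a homotopy pushout -- $\ham(\C^c+\D)$ is the mapping cylinder of the $s\Set$-functor $\ham(F^c)$, whose relevant hom-spaces recover $\hom_{\ham(\D)}(\ham(F^c)(x),y)$ by the (first one-categorical, then homotopy-coherent) identification of a mapping cylinder with the associated collage. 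The comparison then factors as $\hom_{\ham(\C^c+\D^f)}(x,y) \to \hom_{\ham(\C^c+\D)}(x,y) \to \hom_{\ham(\D)}(F^c(x),y)$, the second leg being this identification and the first being induced by the Bergner weak equivalence $\ham(\D^f) \to \ham(\D)$ in the target slot of the collage. The map into $\cart(\ham(G^f))$ is handled identically after exchanging the roles of $\C$ and $\D$ and of $F$ and $G$, with bridge bimodule $(a,b) \mapsto \hom_\C(a,G(b)) \cong \hom_{\ham(\C)}\bigl(\ham(j)(a),\ham(G^f)(b)\bigr)$, where $j : \C^c \hookra \C$.

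The step I expect to be the main obstacle is precisely the homotopy-theoretic substance of (c): justifying the bar-construction description and, above all, replacing the ``raw'' bridge bimodule over the ordinary categories $\C^c$ and $\D^f$ by the ``resolved'', weakly equivalent bimodule over their hammock localizations -- equivalently, in the mapping-cylinder packaging, checking that the strict collage-pushouts are genuinely homotopy pushouts (the endpoint inclusions $\C^c \hookra \C^c \times [1]$ are not visibly cofibrations, and hammock localizations need not be Bergner-cofibrant, so one must either cofibrantly replace with care or push the computation down to $ss\Set_\Rezk$ via the strictly product-preserving Rezk nerve $\NerveRezk$) and then reading off the hom-spaces of the resulting homotopy pushouts. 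This is the familiar ``resolving the middle'' comparison at the core of the theory of simplicial localizations, made slightly more delicate here by the standing refusal to assume functorial factorizations; what makes it go through is that $F^c$ and $G^f$ do preserve weak equivalences (Ken Brown's lemma) and that non-functorial co/fibrant replacement still suffices to compute the derived bimodules in question.
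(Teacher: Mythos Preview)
Your reduction to conditions (a), (b), (c) on the two fibers and the cross-fiber hom-spaces is correct and is exactly how the paper opens its argument; your handling of (a) and (b) via \cref{inclusion of co/fibrants induces BK weak equivalence}, and the duality reduction to a single map, are also fine. The divergence is entirely in (c), and there your proposal is a strategy rather than a proof. The assertion that $\hom_{\ham(\C^c+\D^f)}(x,y)$ \emph{is} a two-sided bar construction over $\ham(\C^c)$ and $\ham(\D^f)$ is not literally true at the simplicial-set level: hammock hom-spaces are colimits over zigzag types of nerves of zigzag categories, and the bridge bimodule $\hom_\D(F(-),-)$ is only a module over the ordinary categories $\C^c,\D^f$, not over their hammock localizations. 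Upgrading it to a ``resolved'' bimodule is, as you say, exactly the hard part, and you have not done it. The mapping-cylinder repackaging faces the same obstacle in different clothing: you need the collage pushout to be a homotopy pushout in $\RelCat_\BarKan$ (Barwick--Kan cofibrations are not easy to recognize), you need $\ham$ to preserve it (but $\ham$ is neither left nor right Quillen, and ``equivalence of homotopy theories'' does not by itself control point-set-level pushouts), and you need to read off hom-spaces in the enriched mapping cylinder. None of these is routine, and absent functorial factorizations the usual shortcuts are unavailable.

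The paper's route for (c) is quite different and entirely concrete. It introduces a calculus of ``model diagrams'' and proves two things: first, that both $(\D,\bW_\D)$ and $(\C^c+\D^f,\bW_{\C^c+\D^f})$ admit a \emph{homotopical three-arrow calculus}, so that for $\word{3}=[\bW^{-1};\any;\bW^{-1}]$ the reduction map $\Nerve(\Funp(\word{3},\R)^\bW)\to\hom_{\ham(\R)}(x,y)$ is a weak equivalence; second, that one may further refine $\word{3}$ to $\tilde{\word{3}}=[(\bW\cap\bF)^{-1};\any;(\bW\cap\bC)^{-1}]$. It then chooses special co/simplicial resolutions $x^\bullet\in c(\bW^c_\C)$ and $y_\bullet\in s(\bW^f_\D)$ in the sense of Dwyer--Kan and, following arguments of Mandell, connects $\Nerve(\Funp(\tilde{\word{3}},-)^\bW)$ to $\diag(\hom^\lw(x^\bullet,y_\bullet))$ through an explicit intermediate simplicial set. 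The comparison is then finished by the tautological isomorphism $\hom^\lw_{(\C^c+\D^f)}(x^\bullet,y_\bullet)\cong\hom^\lw_\D(F^c(x^\bullet),y_\bullet)$. In effect, the co/simplicial resolutions supply by hand the ``resolving the middle'' step that your bar-construction argument leaves as the main obstacle; the cost is a fair amount of diagram-chasing with decorated model diagrams, but every step is verified and nothing relies on $\ham$ preserving homotopy colimits or on the existence of functorial replacements.
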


We defer the proof of \cref{main ingredient} to \cref{section proof of main ingredient}.  In essence, we will show that the relative category $(\C^c+\D^f)$ admits a ``homotopical three-arrow calculus'', and then we will use this to show that the hom-objects in $\ham(\C^c+\D^f)$ can be computed using the co/simplicial resolutions of \cite{DKFunc}.  For an object $x \in \C^c$ equipped with a cosimplicial resolution $x^\bullet \in c(\C^c)$ and an object $y \in \D^f$ equipped with a simplicial resolution $y_\bullet \in s(\D^f)$, the isomorphisms
\[ \hom^\lw_\D(F^c(x^\bullet),y_\bullet) \cong \hom^\lw_{(\C^c+\D^f)}(x^\bullet,y_\bullet) \cong \hom^\lw_\C(x^\bullet,G^f(y_\bullet)) \]
of bisimplicial sets (where the superscript ``$\lw$'' stands for ``levelwise'') will, in light of the above observations and upon passing to diagonals, yield weak equivalences
\[ \hom_{\ham(\D)}(F(x),y) = \hom_{\cocart(\ham(F^c))}(x,y) \xla{\approx} \hom_{\ham(\C^c+\D^f)}(x,y) \xra{\approx} \hom_{\cart(\ham(G^f))}(x,y) = \hom_{\ham(\C)}(x,G(y)) \]
in $s\Set_\KQ$ (which are appropriately compatible with the given maps of hom-objects).

Using \cref{main ingredient}, we now prove the main theorem.

\begin{proof}[Proof of \cref{main theorem}]
First of all, using the recognition result \cite[Proposition 5.2.4.4]{LurieHTT} for when a fibrant object in $((\Cat_{s\Set})_\Bergner)_{/[1]}$ induces a cartesian fibration of quasicategories and its dual, it is immediate that the map $\bbR_\Bergner(\cocart(\ham(F^c))) \ra [1]$ induces a cocartesian fibration corresponding to $\uq(F^c)$ and that the map $\bbR_\Bergner(\cart(\ham(G^f))) \ra [1]$ induces a cartesian fibration corresponding to $\uq(G^f)$.\footnote{Essentially, \cite[Proposition 5.2.4.4]{LurieHTT} asks that every object in the fiber over $1 \in [1]$ admit a coreflection in the fiber over $0 \in [1]$.}\footnote{Note that fibrancy in $((\Cat_{s\Set})_\Bergner)_{/[1]}$ is created in $(\Cat_{s\Set})_\Bergner$ (see \cite[Theorem A.3.2.24(2)]{LurieHTT}).}  Moreover, condition (2) of \cite[Proposition 5.2.4.4]{LurieHTT} is clearly invariant under weak equivalence between fibrant objects in $(\Cat_{s\Set})_\Bergner)_{/[1]}$.  Thus, it follows from \cref{main ingredient} that the map
\[ \bbR_\Bergner(\ham(\C^c + \D^f)) \ra [1] \]
in $\Cat_{s\Set}$ induces both a cartesian fibration and a cocartesian fibration of quasicategories: that is, it induces an adjunction
\[ \uq(\C^c+\D^f) \ra \Delta^1 \]
of quasicategories, whose left adjoint can be identified with $\uq(F^c)$ and whose right adjoint can be identified with $\uq(G^f)$.  Finally, the fact that the inclusions $(\C^c,\bW^c_\C) \hookra (\C,\bW_\C)$ and $(\D^f,\bW^f_\D ) \hookra (\D,\bW_\D)$ are weak equivalences in $\RelCat_\BarKan$ (and hence induce weak equivalences in $s\Set_\Joyal$ of underlying quasicategories) follows from \cref{inclusion of co/fibrants induces BK weak equivalence} below.  Hence, choosing retractions in $s\Set_\Joyal$ as indicated, we obtain an adjunction
\[ \begin{tikzcd}
\uq(\C) \arrow[hookleftarrow]{r}{\approx} \arrow[dashed, out=20, in=173]{r} & \uq(\C^c) \cong \uq(\C^c+\D^f)_0 \arrow[hookrightarrow]{r} & \uq(\C^c+\D^f) \arrow[hookleftarrow]{r} \arrow{d} & \uq(\C^c+\D^f)_1 \cong \uq(\D^f) \arrow[hookrightarrow]{r}{\approx} & \uq(\D) \arrow[dashed, out=160, in=7]{l} \\
& & {[1]}
\end{tikzcd} \]
-- which might denoted informally as $\uq(F^c) : \uq(\C) \adjarr \uq(\D) : \uq(G^f)$ --, precisely as claimed.
\end{proof}

\begin{rem}
In general, the property of being a co/cartesian fibration over $S \in s\Set$ is not invariant under weak equivalence between inner fibrations in $(s\Set_\Joyal)_{/S}$.  However, it becomes invariant in the special case that $S = \Delta^1$, a fact we've exploited in the proof of \cref{main theorem} (through our usage of \cite[Proposition 5.2.4.4]{LurieHTT}).  Roughly speaking, this follows from the paucity of nondegenerate edges in $\Delta^1$.  Indeed, recall that given an inner fibration $X \ra S$:
\begin{itemize}
\item an edge $\Delta^1 \ra X$ is \textit{cartesian} (with respect to $X \ra S$) if it satisfies some universal property defined in terms of all of $X$ and $S$ (see \cite[Definition 2.4.1.1 and Remark 2.4.1.9]{LurieHTT});
\item an edge of $\Delta^1 \ra X$ is \textit{locally cartesian} if the induced edge $\Delta^1 \ra \Delta^1 \times_S X$ is cartesian with respect to the inner fibration $\Delta^1 \times_S X \ra \Delta^1$ (see \cite[Definition 2.4.1.11]{LurieHTT});
\item the map $X \ra S$ is a (resp.\! \textit{locally}) \textit{cartesian fibration} if it has a sufficient supply of (resp.\! locally) cartesian edges (see \cite[Definitions 2.4.2.1 and 2.4.2.6]{LurieHTT});
\item a locally cartesian fibration is a cartesian fibration if and only if the locally cartesian edges are ``closed under composition'' in the strongest possible sense (see \cite[Proposition 2.4.2.8]{LurieHTT});
\item if an edge of $X$ maps to an equivalence in $S$, then that edge is cartesian if and only if it is also an equivalence (see \cite[Proposition 2.4.1.5]{LurieHTT});
\item in light of the universal property defining cartesian edges, pre- or post-composing a cartesian edge in $X$ with an equivalence which projects to a degenerate edge in $S$ clearly yields another cartesian edge.
\end{itemize}
(The notions of cartesian and locally cartesian fibrations are the quasicategorical analogs of the 1-categorical notions of ``Grothendieck fibrations'' and ``Grothendieck prefibrations''.)
\end{rem}

\begin{rem}
One might also wonder about the possibility of using the objects $\cocart(F^c) = (\C^c+\D)$ and $\cart(G^f) = (\C+\D^f)$ of $\RelCat_{/[1]}$ in order to prove \cref{main theorem}.  In fact, it is not so hard to show that the inclusions $\cocart(F^c) \hookla (\C^c+\D^f) \hookra \cart(G^f)$ are weak equivalences in $\RelCat_\BarKan$, and moreover there are natural maps $\ham(\cocart(F^c)) \ra \cocart(\ham(F^c))$ and $\ham(\cart(G^f)) \ra \cart(\ham(G^f))$ in $\Cat_{s\Set}$, but it is essentially no easier to show that these latter maps are weak equivalences in $(\Cat_{s\Set})_\Bergner$ than it is to prove \cref{main ingredient}.  However, these intermediate objects will appear in the course of the proof of \cref{main ingredient}.
\end{rem}

We end this section with a result used in the proof of the main theorem, which is to appear in the forthcoming paper \cite{BHH}.\footnote{The authors of \cite{BHH} in turn credit Cisinski for their proof.  They actually work in the more general setting of ``weak fibration categories'', and in their proof they replace the appeal to \cite[Theorem A.3.2(1)]{Hinich} with an appeal to work of Cisinski's.}

\begin{rem}\label{acknowledge DK}
The following result actually goes back to \cite[Proposition 5.2]{DKFunc}, but the proof given there relies on a claim whose proof is omitted, namely that the relative category $(\C^c,\bW^c_\C)$ admits a ``homotopy calculus of left fractions'' as in \cite[6.1(ii)]{DKCalc} (see \cite[8.2(ii)]{DKFunc}).  We have not been able to prove this result ourselves, so we provide this alternative proof for completeness.
\end{rem}

\begin{lem}\label{inclusion of co/fibrants induces BK weak equivalence}
The inclusions $(\C^c , \bW^c_\C) \hookra (\C,\bW_\C)$ and $(\D^f,\bW^f_\D) \hookra (\D,\bW_\D)$ are weak equivalences in $\RelCat_\BarKan$.
\end{lem}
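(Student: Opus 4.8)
The plan is to verify that the induced map of Rezk nerves --- equivalently, by \cite[Theorem 1.7]{BK-simploc}, of hammock localizations --- is homotopically fully faithful and essentially surjective, which characterizes the weak equivalences of $\RelCat_\BarKan$. Passing to opposite categories carries $(\D^f,\bW^f_\D) \hookra (\D,\bW_\D)$ to $((\D^{op})^c, \bW^c_{\D^{op}}) \hookra (\D^{op}, \bW_{\D^{op}})$ and preserves Barwick--Kan weak equivalences (under the Rezk nerve it corresponds to the self-equivalence of $ss\Set_\Rezk$ reversing both simplicial directions); since $\D^{op}$ is again a model category, the second assertion reduces to the first. So it suffices to treat $(\C^c,\bW^c_\C) \hookra (\C,\bW_\C)$, and for it essential surjectivity is immediate: every $x \in \C$ receives a weak equivalence $x' \to x$ from a cofibrant object $x' \in \C^c$ (a not-necessarily-functorial cofibrant replacement), and weak equivalences become equivalences in the Rezk nerve.

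For full faithfulness we must show that, for $x,y \in \C^c$, the comparison map $\hom_{\ham(\C^c)}(x,y) \to \hom_{\ham(\C)}(x,y)$ is a weak equivalence in $s\Set_\KQ$. The plan is to reduce both sides to one and the same computation, exploiting that $\C^c \subseteq \C$ is full. First replace $y$ by a fibrant replacement $Ry$, which --- $y$ being cofibrant --- may be taken bifibrant, hence in $\C^c$; the weak equivalence $y \to Ry$ becomes an equivalence in $\ham(\C^c)$ and in $\ham(\C)$ alike, so this replacement is harmless. Next choose a cosimplicial resolution $x^\bullet$ of $x$, built level by level by factoring the (finite-colimit) latching maps via the factorizations of the model structure; since cofibrant objects are closed under coproducts and pushout along cofibrations, $x^\bullet$ may be taken with all terms in $\C^c$, and is then equally a cosimplicial resolution inside $\C^c$. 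On the $\C$-side the Dwyer--Kan resolution theorem \cite{DKFunc} gives $\hom_{\ham(\C)}(x,Ry) \simeq \hom_\C(x^\bullet, Ry)$; granted the corresponding formula $\hom_{\ham(\C^c)}(x,Ry) \simeq \hom_{\C^c}(x^\bullet, Ry)$, full faithfulness follows, because $\hom_{\C^c}(x^\bullet, Ry) = \hom_\C(x^\bullet, Ry)$ by fullness and all of these identifications are compatible with the comparison map.

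The main obstacle is precisely that last formula: $\C^c$ is \emph{not} a model category, so the Dwyer--Kan resolution theorem does not apply to it on the nose. This is exactly the gap in \cite[Proposition 5.2]{DKFunc}, bridged there by the unproven claim that $(\C^c,\bW^c_\C)$ admits a ``homotopy calculus of left fractions'' in the sense of \cite[6.1(ii)]{DKCalc}. The plan is to use instead the structure that \emph{is} available without functorial factorization: $\C^c$ is closed under pushout along cofibrations and under factoring maps as a cofibration followed by a weak equivalence --- in particular cylinder objects of cofibrant objects are again cofibrant --- and is therefore a weak cofibration category (dually, $\C^f$ is a weak fibration category, i.e.\ a category of fibrant objects in Brown's sense). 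One then invokes the description of simplicial-localization mapping spaces for such categories --- namely \cite[Theorem A.3.2(1)]{Hinich}, or its extension (after Cisinski) to weak fibration categories in \cite{BHH} --- and reconciles it with the Dwyer--Kan description for the ambient model category $\C$. That reconciliation is the heart of the argument; once it is in hand, the lemma follows.
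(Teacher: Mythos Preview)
Your proposal correctly isolates the essential difficulty --- that $\C^c$ is not a model category, so the Dwyer--Kan resolution formula for hammock mapping spaces does not apply directly --- and you rightly note that this is exactly the gap in \cite[Proposition 5.2]{DKFunc} flagged in \cref{acknowledge DK}. But you do not actually close it: you write that ``that reconciliation is the heart of the argument; once it is in hand, the lemma follows,'' yet the reconciliation is never carried out. Pointing toward the weak cofibration category structure on $\C^c$ and toward \cite{Hinich} or \cite{BHH} is a reasonable direction, but it is not a proof. In particular, \cite[Theorem A.3.2(1)]{Hinich} as used in this paper is a statement about the contractibility of categories of (co)fibrant replacements, not a formula for simplicial-localization mapping spaces; at minimum you would need to name precisely which result yields the cosimplicial-resolution description of $\hom_{\ham(\C^c)}(x,Ry)$ and then check its compatibility with the corresponding formula in $\C$. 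As written, the proposal is a plan whose hard step is left undone.

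The paper's proof takes an entirely different and much shorter route that sidesteps hammock mapping spaces altogether. It works levelwise on the Rezk nerve: one must show that for each $n \geq 0$ the inclusion $\Fun([n],\D^f)^{\bW^f_\D} \hookra \Fun([n],\D)^{\bW_\D}$ induces a weak equivalence on nerves. The key observation is that $\Fun([n],\D)$ carries the projective (equivalently Reedy) model structure, in which the fibrant objects are exactly the objectwise-fibrant diagrams; hence this inclusion is literally $\bW^f_{\Fun([n],\D)} \hookra \bW_{\Fun([n],\D)}$ for a genuine model category. That inclusion induces a weak equivalence on nerves by combining Quillen's Theorem A with \cite[Theorem A.3.2(1)]{Hinich} (contractibility of fibrant replacements). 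Thus the whole lemma reduces to a single standard fact about model categories, applied not to $\D$ but to $\Fun([n],\D)$, and no hammock hom-space ever needs to be computed.
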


\begin{proof}
We will prove the second of these two dual statements, which we will accomplish by showing that the map $(\D^f,\bW^f_\D) \hookra (\D,\bW_\D)$ induces a weak equivalence in $s(s\Set_\KQ)_\Reedy$ upon application of the functor $\NerveRezk : \RelCat \ra ss\Set$.  In other words, we will show that for all $n \geq 0$, the inclusion $\Fun([n],\D^f)^{\bW^f_\D} \hookra \Fun([n],\D)^{\bW_\D}$ induces a weak equivalence in $s\Set_\KQ$ upon application of the functor $\Nerve : \Cat \ra s\Set$.  For this, let us equip $\Fun([n],\D)$ with the projective model structure, which exists since it coincides with the Reedy model structure when we consider $[n]$ as a Reedy category with no non-identity degree-lowering maps.  Then, the above inclusion is precisely the inclusion $\bW^f_{\Fun([n],\D)} \hookra \bW_{\Fun([n],\D)}$, which induces a weak equivalence on nerves by combining the duals of \cite[Theorem A]{QuillenAKTI} and \cite[Theorem A.3.2(1)]{Hinich}.
\end{proof}

\section{Model diagrams and the proof of \cref{main ingredient}}\label{section proof of main ingredient}

In proving statements about categories of diagrams in model categories, it is convenient to have a general framework for parametrizing them.

\begin{defn}\label{define model diagram}
A \bit{model diagram} is a category $\I$ equipped with three wide subcategories $\bW_\I,\bC_\I,\bF_\I \subset \I$ such that $\bW_\I$ satisfies the two-out-of-three axiom.\footnote{The assumption that $\bW_\I$ satisfies the two-out-of-three axiom is probably superfluous, since we'll generally be mapping into model diagrams whose weak equivalences already have this property (namely the model category $\D$ as well as $(\C^c+\D^f)$ and its cousins).  Nevertheless, it seems like a good idea to include it, just to be safe.}  These assemble into the evident category, which we denote by $\Model$.  For $\I,\J \in \Model$, we denote by $\Fun(\I,\J)^\bW$ the category whose objects are morphisms of model diagrams and whose morphisms are natural \textit{weak equivalences} between them.  We will consider relative categories (and in particular, categories) as equipped with the \textit{minimal} model diagram structure (in which $\bC$ and $\bF$ consist only of the identity maps). 
\end{defn}

\begin{rem}
Among the axioms for a model category, all but the limit axiom (so the two-out-of-three, retract, lifting, and factorization axioms) can be encoded by requiring that the underlying model diagram has the extension property with respect to certain maps of model diagrams.
\end{rem}

\begin{var}\label{decorated model diagram}
A \bit{decorated model diagram} is a model diagram with some subdiagrams decorated as colimit or limit diagrams.\footnote{These are closely related to what are now called ``sketches'', originally introduced in \cite{EhresmannSketch}.}  For instance, if we define $\I$ to be the ``walking pullback square'', 
then for any other model diagram $\J$, we let $\hom^\dec_\Model(\I,\J) \subset \hom_\Model(\I,\J)$ and $\Fun^\dec(\I,\J)^\bW \subset \Fun(\I,\J)^\bW$ denote the subobjects spanned by those morphisms $\I \ra \J$ of model diagrams which select a pullback square in $\J$.

For the most part, we will only use this variant on \cref{define model diagram} for pushout and pullback squares.  In fact, all but one of the model diagrams that we will decorate in this way will only have a single square anyways, and so in the interest of easing our Ti\textit{k}Zographical burden, we will simply superscript these model diagrams with either ``p.o.'' or ``p.b.'' (as in the proof of \cref{special-3 to 3} below).  The other one (which will appear in the proof of \cref{the two cases have homotopical three-arrow calculi}) will be endowed with sufficiently clear ad hoc notation.

However, this formalism also allows us to require that certain objects are sent to cofibrant or fibrant objects, by decorating a new object as initial/terminal and then marking its maps to/from the other objects as co/fibrations.  Rather than write this explicitly, we will abbreviate the notation for this procedure by superscripting objects by $c$, $f$, or $cf$ (to indicate that under a morphism of model diagrams they must select cofibrant, fibrant, or bifibrant objects, respectively).

Note that the constructions $\hom^\dec_\Model(\I,\J)$ and $\Fun^\dec(\I,\J)^\bW$ are not generally functorial in the target $\J$.  On the other hand, they are functorial for \textit{some} maps in the source $\I$.  We will refer to such maps as \bit{decoration-respecting}.  These define a category $\Model^\dec$.  (Note the distinction between $\hom_{\Model^\dec}$ and $\hom^\dec_\Model$.)  We consider $\Model \subset \Model^\dec$ simply by considering undecorated model diagrams as being trivially decorated.  We will not need a general theory for understanding which maps of decorated model diagrams are decoration-respecting; rather, it will suffice to observe once and for all
\begin{itemize}
\item that objects marked as co/fibrant must be sent to the same, and
\item that given a square which is decorated as a pushout or pullback square, it is decoration-respecting to either
\begin{itemize}
\item take it to another similarly decorated square, or
\item collapse it onto a single edge (since a square in which two parallel edges are identity maps is both a pushout and a pullback).
\end{itemize}
\end{itemize}

Note that if the source of a map of decorated model diagrams is actually undecorated, then the map is automatically decoration-respecting; in other words, we must only check that maps in which the \textit{source} is decorated are decoration-respecting.
\end{var}

\begin{rem}
Of course, adding in this variant allows us to also demand finite bicompleteness of a model diagram via lifting conditions, and hence all of the axioms for a model diagram to be a model category can now be encoded in this language.
\end{rem}

We will be interested diagrams in model categories which connect specified ``source'' and ``target'' objects.  We thus introduce the following variant.

\begin{var}\label{define doubly-pointed model infty-diagram}
A \bit{doubly-pointed model diagram} is a model diagram $\I$ equipped with a map $\pt \sqcup \pt \ra \I$.  The two inclusions $\pt \hookra \pt \sqcup \pt$ select objects $s, t \in \I$, which we call the \bit{source} and the \bit{target}.  These assemble into the evident category, which we denote by $\Modelp = \Model_{\pt \sqcup \pt/}$.  Of course, there is a forgetful functor $\Modelp \ra \Model$, which we will occasionally implicitly use.  For $\I,\J \in \Modelp$, we denote by $\Funp(\I,\J)^{\bW} \subset \Fun(\I,\J)^\bW$ the (not generally full) subcategory whose objects are those morphisms of model diagrams which preserve the double-pointing, and whose morphisms are those natural weak equivalences whose components at $s$ and $t$ are respectively $\id_s$ and $\id_t$.  We will refer to such a morphism as a \bit{doubly-pointed natural weak equivalence}.  If we have chosen ``source'' and ``target'' objects in a model diagram, we will use these to consider the model diagram as doubly-pointed without explicitly mentioning it.  Of course, we may decorate a doubly-pointed model diagram as in \cref{decorated model diagram}.
\end{var}

\begin{var}
We define a \bit{model word} to be a word $\word{m}$ in any the symbols describing a morphism in a model diagram or their inverses (e.g.\! $\bW$, $(\bW \cap \bF)^{-1}$, $(\bW \cap \bC)$), or in the symbol $\any$ (for ``any arbitrary arrow'') or its inverse.  We will write $\any^{\circ n}$ to denote $n$ consecutive copies of the symbol $\any$ (for any $n \geq 0$).  We can extract a doubly-pointed model diagram from a model word, which for our sanity we will carry out by reading \textit{forwards}.  So for instance, the model word $\word{m} = [\bC;(\bW \cap \bF)^{-1};\any]$ defines the doubly-pointed model diagram
\[ \begin{tikzcd}
s \arrow[tail]{r} & \bullet & \bullet \arrow[two heads]{l}[swap]{\approx} \arrow{r} & t.
\end{tikzcd} \]
We denote this object again by $\word{m} \in \Modelp$.
\end{var}

\begin{rem}
Restricting to those model words in the symbols $\any$ and $\bW^{-1}$ and the \textit{order-preserving} maps between them, we recover the category of ``zigzag types'', i.e.\! the opposite of the category $\II$ of \cite[4.1]{DKCalc}.  In this way, we consider $\II^{op} \subset \Modelp$ as a (non-full) subcategory.  For any relative category $(\R,\bW_\R) \in \RelCat$ and any objects $x,y \in \R$, by \cite[Proposition 5.5]{DKCalc} we have an isomorphism
\[ \colim_{\word{m} \in \II} \Nerve\left( \Funp(\word{m},\R)^\bW \right)  \xra{\cong} \hom_{\ham(\R)}(x,y) \]
of simplicial sets, induced by the maps $\Nerve \left(\Funp(\word{m},\R)^\bW \right) \ra \hom_{\ham(\R)}(x,y)$ given by reducing the hammocks involved (as described in \cite[2.1]{DKCalc}).
\end{rem}

\begin{notn}
We will use the abbreviations $\word{3} = [\bW^{-1} ; \any ; \bW^{-1}]$ and $\tilde{\word{3}} = [(\bW \cap \bF)^{-1} ; \any ; (\bW \cap \bC)^{-1}]$; these model words correspond to the doubly-pointed model diagrams
\[ \begin{tikzcd}
s & \bullet \arrow{r} \arrow{l}[swap]{\approx} & \bullet & t \arrow{l}[swap]{\approx}
\end{tikzcd} \]
and
\[ \begin{tikzcd}
s & \bullet \arrow{r} \arrow[two heads]{l}[swap]{\approx} & \bullet & t , \arrow[tail]{l}[swap]{\approx}
\end{tikzcd} \]
respectively.
\end{notn}

Note that there is a unique map $\word{3} \ra \tilde{\word{3}}$ in $\Modelp$.  In \cite[7.2(ii)]{DKFunc}, Dwyer--Kan indicate how to prove that for any $x,y \in \D$, the induced composite
\[ \Nerve \left( \Funp(\tilde{\word{3}} , \D )^\bW \right) \ra \Nerve \left(\Funp(\word{3} , \D)^\bW \right) \ra \hom_{\ham(\D)}(x,y) \]
is a weak equivalence in $s\Set_\KQ$.  In order to prove \cref{main ingredient}, we will show that these two maps are again weak equivalences if we replace $\D$ by $(\C^c+\D^f)$ and take $x \in \C^c$ and $y \in \D^f$.  The arguments for $(\C^c+\D^f)$ are patterned on those for $\D$, and so for the sake of exposition we will re-prove that case in tandem.

We begin with some preliminary results.

\begin{lem}\label{abstractify DKFunc 8.1}
Choose any doubly-pointed model diagram $\I \in \Modelp$, select a weak equivalence in $\I$ by choosing a map $[\bW] \ra \I$ in $\Model$, and define $\J \in \Modelp$ by taking a pushout
\[ \begin{tikzcd}
{[\bW]} \arrow{r} \arrow{d} & \I \arrow{d} \\
{[(\bW \cap \bC) ; (\bW \cap \bF) ]} \arrow{r} & \J
\end{tikzcd} \]
in $\Model$ (where the left map is the unique map in $\Modelp$, and $\J$ is doubly-pointed via the composition $\pt \sqcup \pt \ra \I \ra \J$).  Then, the map $\I \ra \J$ induces
\begin{enumerate}

\item\label{abstractify for model cat} for any $x,y \in \D$, a weak equivalence
\[ \Nerve \left( \Funp ( \J , \D )^\bW \right)  \we \Nerve \left( \Funp ( \I , \D )^\bW \right) , \]
and

\item\label{abstractify for collage} for any $x,y \in (\C^c+\D^f)$, a weak equivalence
\[ \Nerve \left( \Funp ( \J , (\C^c+\D^f) )^\bW \right)  \we \Nerve \left( \Funp ( \I , (\C^c+\D^f) )^\bW \right) . \]

\end{enumerate}
\end{lem}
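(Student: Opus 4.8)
The plan is to prove the two parts by a single argument whose target I will call $\mathcal{E}$: this is $\D$ in part (1), and $(\C^c+\D^f)$ in part (2), with the objects $x \in \C^c$ and $y \in \D^f$ held fixed. The only properties of $\mathcal{E}$ the argument uses are that $\bW_\mathcal{E}$ satisfies two-out-of-three, that $\mathcal{E}$ carries classes of (trivial) cofibrations and fibrations with the usual lifting property, and that any weak equivalence in $\mathcal{E}$ which can arise as the image of the chosen weak equivalence of $\I$ factors as a trivial cofibration followed by a trivial fibration. For $\D$ this is part of the model structure. For $(\C^c+\D^f)$, one notes that since no bridge arrow is a weak equivalence, any weak equivalence lies wholly in $\C^c$ or wholly in $\D^f$, and one then carries out the factorization (and all of the auxiliary constructions below) inside the ambient model category $\C$ or $\D$, landing back in the appropriate full subcategory since trivial cofibrations preserve cofibrancy and fibrations preserve fibrancy.

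Next I would identify the functor in question. Because $\J$ is the displayed pushout, a doubly-pointed $\J$-diagram in $\mathcal{E}$ is the same thing as a doubly-pointed $\I$-diagram $C$ in $\mathcal{E}$ together with a factorization of the weak equivalence $C(\phi)$ --- where $\phi\colon a\to b$ is the chosen weak equivalence of $\I$ --- as a trivial cofibration followed by a trivial fibration, and a morphism of $\J$-diagrams is a morphism of underlying $\I$-diagrams together with a compatible weak equivalence of intermediate objects. So the map induced by $\I\hookra\J$ is the functor $U\colon\Funp(\J,\mathcal{E})^\bW\to\Funp(\I,\mathcal{E})^\bW$ that forgets the factorization, and I would show $\Nerve(U)$ is a weak equivalence using Quillen's Theorem A \cite[Theorem A]{QuillenAKTI}: it is enough to show that for every $D\in\Funp(\I,\mathcal{E})^\bW$ the comma category $D\downarrow U$ has weakly contractible nerve.

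To handle $D\downarrow U$, I would project it --- by remembering only the underlying $\I$-diagram and the structure map --- onto the undercategory $D\downarrow\Funp(\I,\mathcal{E})^\bW$, which has the initial object $(D,\id_D)$ and so has weakly contractible nerve; the fiber over an object $(C,\alpha\colon D\we C)$ is precisely the category of trivial-cofibration/trivial-fibration factorizations of $C(\phi)$. The lifting property moreover shows that, on endowing $D$ with any chosen factorization of $D(\phi)$, one obtains a weakly initial object of $D\downarrow U$. Thus the proof reduces to showing (a) that the category $\mathcal{K}$ of factorizations of a fixed weak equivalence $\psi$ as a trivial cofibration followed by a trivial fibration is weakly contractible, and then (b) deducing weak contractibility of $D\downarrow U$ by a gluing argument over the weakly contractible base of underlying $\I$-diagrams (recognizing the above projection as, up to the homotopy already present in the model axioms, a fibration with weakly contractible fibers over a weakly contractible base; alternatively, one promotes the weak initiality to a contraction once the requisite comparison morphisms have been organized coherently).

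Step (a) is the technical heart, and the point where the lack of functorial factorizations is genuinely felt --- this is precisely what was asserted without proof in the classical treatment and what the present paper must establish. The plan for it is to show that $\mathcal{K}$ is filtered (its morphisms are automatically weak equivalences by two-out-of-three): it is nonempty by the factorization axiom; any two objects $(M_1,i_1,p_1)$ and $(M_2,i_2,p_2)$ admit a common receptacle, built by forming the pushout $M_1\sqcup_A M_2$ under the common source $A$ and then factoring the resulting weak equivalence to the common target $B$; and any parallel pair of morphisms of $\mathcal{K}$ is coequalized after composition with a further morphism of $\mathcal{K}$, which one produces by a cylinder-object argument --- the two competing maps agree on $A$ and, upon lifting against the trivial fibration to $B$, become fiberwise homotopic rel $A$, and one then pushes this homotopy forward and re-factors to restore the fibration over $B$. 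I expect this last point --- coequalizing parallel pairs within $\mathcal{K}$ while keeping the structure maps $i$ and $p$ under control --- to be the main obstacle.
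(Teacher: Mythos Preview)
Your reduction via Quillen's Theorem~A and your identification of the fibers with categories of trivial-cofibration/trivial-fibration factorizations are both correct, and your remark that in $(\C^c+\D^f)$ every weak equivalence lies entirely in one fiber (so that the auxiliary constructions can be carried out in the ambient model category $\C$ or $\D$, landing back in $\C^c$ or $\D^f$) matches the paper's own observation for part~(2). However, the argument has two genuine gaps. First, Step~(b) does not follow: the projection $D\downarrow U \to D\downarrow\Funp(\I,\mathcal{E})^\bW$ is \emph{not} a Grothendieck (op)fibration in the absence of functorial factorizations, so ``weakly contractible fibers over a weakly contractible base'' is insufficient, and weak initiality of a chosen lift of $(D,\id_D)$ does not by itself yield contractibility (think of $BG$). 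Second, in Step~(a) the coequalizing part of the filteredness argument is not just unfinished but genuinely problematic: two parallel morphisms $f,g\colon M_1\rightrightarrows M_2$ in $\mathcal{K}$ are indeed homotopic rel~$A$ over~$B$, but a homotopy does not produce an $h\colon M_2\to M_3$ in $\mathcal{K}$ with $hf=hg$, and there is no evident way to build one while keeping both structure maps $i$ and $p$ in their required classes. So $\mathcal{K}$ may well fail to be filtered even though its nerve is contractible.

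The paper takes a different route: it observes that the statement is a mild generalization of \cite[8.1]{DKFunc} and that the proof there adapts. That proof does not attempt to show the factorization category is filtered; instead it uses the \emph{special simplicial resolutions} constructed in \cite[6.7]{DKFunc} (which exist without functorial factorizations) to build an explicit simplicial contraction. In effect, one chooses a factorization and then inductively extends it to a Reedy-fibrant, homotopically constant simplicial object, and the resulting extra degeneracies furnish the contracting homotopy directly. This bypasses both of your gaps at once: it handles contractibility of each $\mathcal{K}$ and, because the resolution is built compatibly with the diagram data, the passage from fiberwise contractibility to contractibility of $D\downarrow U$ is absorbed into the same construction. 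If you want to pursue your line, the cleanest fix is probably to abandon filteredness and instead exhibit a simplicial contraction of $D\downarrow U$ directly---which is exactly what the special resolutions provide.
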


\begin{proof}
This is a mild generalization of \cite[8.1]{DKFunc}, and the proof adapts readily.\footnote{In \cite[6.7]{DKFunc}, which constructs (special) simplicial resolutions, the factorization of the latching-to-matching map which produces the next simplicial level should be as $\wcofibn \fibn$, not $\wcofibn \wfibn$.}  The following observations may be helpful.
\begin{itemize}
\item The proof is unaffected by whether or not the map $[\bW] \ra \I$ selects an identity map, and by whether or not it hits one or both of the objects $s,t\in \I$.
\item The proof does not require that the map $[\bW] \ra \I$ be ``free'' (i.e.\! obtained by taking a pushout $[\bW] \la \pt \sqcup \pt \ra \I'$), although we will actually only need this special case.
\item For item \ref{abstractify for collage}, note that all of the computations happen in one fiber or the other (since none of the bridge arrows are weak equivalences), and that the necessary simplicial resolution will automatically lie in the relevant subcategory $\C^c \subset \C$ or $\D^f \subset \D$ (in fact, it will even consist of \textit{bifibrant} objects).
\qedhere

\end{itemize}

\end{proof}

\begin{lem}\label{natural weak equivalences of model diagrams corepresent natural transformations}
Let $\I,\J \in \Modelpdec$ be decorated doubly-pointed model diagrams, let $\alpha,\beta : \I \rra \J$ be parallel morphisms in $\Modelpdec$, and let $\gamma : \alpha \ra \beta$ be a doubly-pointed natural weak equivalence.  Then for any $\E \in \Modelp$, $\gamma$ induces a natural transformation between the induced parallel maps $\alpha^*,\beta^* : \Funp^\dec(\J,\E)^\bW \rra \Funp^\dec(\I,\E)^\bW$ in $\Cat$.
\end{lem}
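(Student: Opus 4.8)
The plan is to check that $\alpha^{*}$ and $\beta^{*}$ genuinely restrict to functors $\Funp^{\dec}(\J,\E)^{\bW}\to\Funp^{\dec}(\I,\E)^{\bW}$ — which is essentially built into the definition of $\Modelpdec$ — and then to produce the desired natural transformation by whiskering $\gamma$ on the left by each object $\Phi$, the verification of naturality being an instance of the Godement interchange law in $\Cat$. The only point that requires any thought is that both whiskering operations keep one inside the prescribed subcategories $\Funp^{\dec}(-,\E)^{\bW}$, which is where the explicit bookkeeping recorded in \cref{decorated model diagram} gets used.

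First I would record why $\alpha^{*}$ (and symmetrically $\beta^{*}$) is well defined. On objects it sends $\Phi : \J\to\E$ — a doubly-pointed morphism of model diagrams carrying the decorated squares and the co/fibrant-marked objects of $\J$ to actual pushout/pullback squares and co/fibrant objects of $\E$ — to the composite $\Phi\alpha : \I\to\E$. This is again a doubly-pointed morphism of model diagrams, and it is decoration-respecting: by the criteria tabulated in \cref{decorated model diagram}, $\alpha$ carries each $c$-, $f$-, or $cf$-marked object of $\I$ to an object of $\J$ marked the same, and carries each decorated square of $\I$ either to a similarly decorated square of $\J$ or collapses it onto an edge; in the first case $\Phi$ sends the result to the required co/limit square of $\E$, and in the second $\Phi$ sends it to a square with two parallel identity edges, which is automatically both a pushout and a pullback. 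Hence $\Phi\alpha\in\Funp^{\dec}(\I,\E)^{\bW}$. On morphisms, a doubly-pointed natural weak equivalence $\delta : \Phi\to\Psi$ whiskers on the right by $\alpha$ to the natural transformation $\delta\alpha$ with components $(\delta\alpha)_{i}=\delta_{\alpha(i)}$; these lie in $\bW_{\E}$ since $\delta$ is a natural weak equivalence, and the components of $\delta\alpha$ at the source and target are identities since $\delta$ is doubly-pointed and $\alpha$ preserves the double-pointing. So $\delta\alpha$ is a doubly-pointed natural weak equivalence, and functoriality of $\alpha^{*}$ is then formal.

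Next I would define the transformation $\alpha^{*}\Rightarrow\beta^{*}$: its component at $\Phi\in\Funp^{\dec}(\J,\E)^{\bW}$ is the whiskered natural transformation $\Phi\gamma : \Phi\alpha\Rightarrow\Phi\beta$ with components $(\Phi\gamma)_{i}=\Phi(\gamma_{i})$. Since $\Phi$ is a morphism of model diagrams and each $\gamma_{i}\in\bW_{\J}$, each $\Phi(\gamma_{i})\in\bW_{\E}$; and since $\gamma$ is doubly-pointed while $\Phi$ preserves the double-pointing, the components of $\Phi\gamma$ at the source and target are identities. Thus $\Phi\gamma$ is a doubly-pointed natural weak equivalence from $\alpha^{*}(\Phi)$ to $\beta^{*}(\Phi)$, i.e.\ a morphism of $\Funp^{\dec}(\I,\E)^{\bW}$. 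Finally, for any morphism $\delta : \Phi\to\Psi$ of $\Funp^{\dec}(\J,\E)^{\bW}$, the naturality square with horizontal edges $\Phi\gamma,\Psi\gamma$ and vertical edges $\alpha^{*}(\delta)=\delta\alpha$, $\beta^{*}(\delta)=\delta\beta$ commutes, since $(\delta\beta)\circ(\Phi\gamma)=(\Psi\gamma)\circ(\delta\alpha)$ is exactly the interchange identity for the horizontal composite of $\delta$ and $\gamma$ in $\Cat$. This yields the asserted natural transformation. As flagged above, I expect no genuine obstacle: the argument is entirely formal once one has checked — via \cref{decorated model diagram} — that precomposition with the decoration-respecting map $\alpha$ and postcomposition with each $\Phi$ both respect the conditions cutting out $\Funp^{\dec}(-,\E)^{\bW}$.
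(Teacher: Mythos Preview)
Your proof is correct and is precisely the unwinding of the definitions that the paper has in mind; the paper's own proof reads in its entirety ``This is immediate from the definitions.'' Your careful verification that precomposition with a decoration-respecting map and whiskering by $\gamma$ both land in the appropriate $\Funp^{\dec}(-,\E)^{\bW}$ is exactly what one must check, and there is nothing more to it.
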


\begin{proof}
This is immediate from the definitions.
\end{proof}

We can now prove the first weak equivalence.

\begin{prop}\label{special-3 to 3}
The unique map $\word{3} \ra \tilde{\word{3}}$ in $\Modelp$ induces

\begin{enumerate}

\item\label{special-3 to 3 for model cat} for any $x,y \in \D$, a weak equivalence
\[ \Nerve \left( \Funp(\tilde{\word{3}} , \D )^\bW \right) \we \Nerve \left( \Funp(\word{3} , \D)^\bW \right) , \]
and

\item\label{special-3 to 3 for collage} for any $x \in \C^c$ and $y \in \D^f$, a weak equivalence
\[ \Nerve(\Funp(\tilde{\word{3}} , (\C^c+\D^f))^\bW) \we \Nerve(\Funp(\word{3} , (\C^c+\D^f))^\bW) . \]

\end{enumerate}
\end{prop}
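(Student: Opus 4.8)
The map $\word{3}\to\tilde{\word{3}}$ is the identity on underlying categories, so the plan is to factor it through a convenient five-arrow model word and then finish by two-out-of-three. Let $\word{M}:=[(\bW\cap\bF)^{-1};(\bW\cap\bC)^{-1};\any;(\bW\cap\bF)^{-1};(\bW\cap\bC)^{-1}]$ be the model word obtained from $\word{3}$ by factoring each of its two weak equivalences as an acyclic cofibration followed by an acyclic fibration. Two successive applications of \cref{abstractify DKFunc 8.1} (using its part (1) for case (1), its part (2) for case (2)) show that the resulting map $u\colon\word{3}\to\word{M}$ induces weak equivalences $\Nerve(\Funp(\word{M},\D)^\bW)\we\Nerve(\Funp(\word{3},\D)^\bW)$ and $\Nerve(\Funp(\word{M},(\C^c+\D^f))^\bW)\we\Nerve(\Funp(\word{3},(\C^c+\D^f))^\bW)$. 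There is moreover an evident ``collapsing'' map $\rho\colon\word{M}\to\tilde{\word{3}}$ that sends the fresh acyclic cofibration on the left and the fresh acyclic fibration on the right to identity maps (and sends the remaining three arrows to, respectively, the acyclic fibration, the $\any$-arrow, and the acyclic cofibration of $\tilde{\word{3}}$); one checks directly that $\rho\circ u$ is the canonical map. So it suffices to prove that $\rho$ too induces weak equivalences on the relevant nerves.

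The obstruction to splitting $\rho$ is that the two ``long composites'' of $\word{M}$ are marked only as weak equivalences, whereas $\tilde{\word{3}}$ demands an acyclic fibration and an acyclic cofibration. So let $\word{M}'$ be $\word{M}$ with its left long composite additionally marked as an acyclic fibration and its right long composite additionally marked as an acyclic cofibration --- exactly the markings that \cref{abstractify DKFunc 8.1} forces if one builds the same five-arrow word instead by factoring the acyclic fibration and the acyclic cofibration of $\tilde{\word{3}}$. This yields a map $\sigma\colon\tilde{\word{3}}\to\word{M}'$ which (again by two applications of \cref{abstractify DKFunc 8.1}) induces weak equivalences on nerves, the marking-adjoining map $e\colon\word{M}\to\word{M}'$, and a collapsing map $\rho'\colon\word{M}'\to\tilde{\word{3}}$ refining $\rho$, with $\rho=\rho'\circ e$ and $\rho'\circ\sigma=\id_{\tilde{\word{3}}}$. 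From the latter identity and the fact that $\sigma^\ast$ is a weak equivalence, two-out-of-three gives that $(\rho')^\ast$ is a weak equivalence; hence $\rho^\ast=e^\ast\circ(\rho')^\ast$, and therefore $f^\ast=u^\ast\circ\rho^\ast$, will be a weak equivalence provided $e^\ast$ is one.

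Thus everything reduces to showing that $e^\ast$ is a weak equivalence, for $\E=\D$ and for $\E=(\C^c+\D^f)$ with source in $\C^c$ and target in $\D^f$: that is, adjoining those two markings does not change the homotopy type of $\Nerve(\Funp(-,\E)^\bW)$. Concretely, $e^\ast$ is the inclusion of the full subcategory of $\Funp(\word{M},\E)^\bW$ spanned by the diagrams whose two long composites are respectively a fibration and a cofibration. I would prove this inclusion is a weak equivalence by constructing a deformation retraction onto it: replace the source of the $\any$-arrow by the fresh object immediately to its left (so that the left long composite becomes literally the given acyclic fibration) and, dually, replace the target of the $\any$-arrow by the fresh object immediately to its right, transporting the $\any$-arrow along the intervening acyclic cofibration (resp.\ acyclic fibration) by extension (resp.\ lifting); the structure maps of this retraction are doubly-pointed natural weak equivalences, so \cref{natural weak equivalences of model diagrams corepresent natural transformations} converts it into homotopies of nerves. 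In case (2) one uses in addition that, since no bridge arrow is a weak equivalence, all of this takes place inside a single fiber.

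I expect this last step to be the crux, and within it the transport of the $\any$-arrow: extending a map along an acyclic cofibration needs its target fibrant, and lifting a map along an acyclic fibration needs its source cofibrant. In case (2) this is automatic, since the left half of such a diagram lies in $\C^c$ and the right half in $\D^f$ --- indeed the freshly produced objects may be taken bifibrant. In case (1) one must first arrange it, for instance by a preliminary reduction (again in the spirit of \cref{abstractify DKFunc 8.1}, and of \cref{inclusion of co/fibrants induces BK weak equivalence}) to model words whose pertinent objects are decorated as bifibrant; alternatively, case (1) can be obtained from \cite[7.2(ii)]{DKFunc} together with the companion statement that $\Nerve(\Funp(\word{3},\D)^\bW)\to\hom_{\ham(\D)}(x,y)$ is a weak equivalence, via two-out-of-three.
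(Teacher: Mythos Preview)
Your reduction is sound: factoring through the five-arrow word $\word{M}$, applying \cref{abstractify DKFunc 8.1} twice at each end, and reducing everything to the claim that $e^*$ (the inclusion of diagrams whose two long composites happen to be a fibration and a cofibration) is a weak equivalence --- all of that is correct. The gap is in the argument for $e^*$ itself. Your ``deformation retraction'' is not a functor: extending the $\any$-arrow $b\to c$ along the acyclic cofibration $b\to a$, and then lifting the result along the acyclic fibration $d\to c$, are solutions to lifting problems, and such solutions involve noncanonical choices that are not natural in the input diagram. So there is no endofunctor $R$ on $\Funp(\word{M},\E)^\bW$ here, and hence nothing to which \cref{natural weak equivalences of model diagrams corepresent natural transformations} applies. (Nor can the retraction be corepresented by a map $\word{M}\to\word{M}$ in $\Modelpdec$: there is simply no arrow from the $a$-slot to the $d$-slot in $\word{M}$.) There is also a direction problem you have not addressed: the comparison at the $b$-slot wants to use $b\to a$, while the comparison at the $c$-slot wants to use $d\to c$; these point opposite ways, so even objectwise you only get a zigzag, each half of which still has the functoriality defect.

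The paper's proof handles exactly this point by replacing your extension/lifting step with universal constructions. Rather than \emph{choosing} an extension of $b\to c$ along $b\to a$, it freely adjoins a \emph{pushout} square (and dually a pullback square on the other side); since pushouts and pullbacks are unique up to unique isomorphism, adjoining them corepresents an acyclic Kan fibration on nerves, and one then collapses along a map admitting an honest section connected to the identity by a doubly-pointed natural weak equivalence. For part \ref{special-3 to 3 for collage} there is the further wrinkle that the pushout may land outside $\D^f$ (dually, the pullback outside $\C^c$), which is why the paper passes through the auxiliary targets $(\C^c+\D)$ and $(\C+\D^f)$ and inserts an extra fibrant-replacement step (\cref{adding a fibrant replacement by an acyclic cofibration to a model diagram gives a weak equivalence in collage}). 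Finally, your fallback for part \ref{special-3 to 3 for model cat} via \cite[7.2(ii)]{DKFunc} and the three-arrow calculus is circular as stated: in this paper the three-arrow calculus for $\D$ is established precisely by recycling the pushout/pullback arguments from the present proof.
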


\begin{proof}
We first address item \ref{special-3 to 3 for model cat}, which is somewhat simpler to prove.  For this, we factor the unique map $\word{3} \ra \tilde{\word{3}}$ in $\Modelp$ through a sequence
\[ \word{3} \xra{\varphi_1} \I_1 \xra{\varphi_2} \I_2 \xra{\varphi_3} \I_3 \xra{\varphi_4} \I_4 \xra{\varphi_5} \I_5 \xra{\varphi_6} \tilde{\word{3}} \]
of maps in $\Modelpdec$, given by
\[ \word{3} = \left( \begin{tikzcd}
s & \bullet \arrow{l}[swap, sloped]{\approx} \arrow{r} & \bullet & t \arrow{l}[swap]{\approx}
\end{tikzcd} \right)
\xra{\varphi_1} \left( \begin{tikzcd}[row sep=0.5cm]
& \bullet \arrow{ld}[swap, sloped, pos=0.3]{\approx} \arrow{r} \arrow[tail]{dd}[sloped, anchor=south]{\approx} & \bullet \\
s & & & t \arrow{lu}[sloped, swap, pos=0.7]{\approx} \\
& \bullet \arrow[two heads]{lu}[sloped, pos=0.3]{\approx}
\end{tikzcd} \right) \]
\[ \xra{\varphi_2} \left( \begin{tikzcd}[row sep=0.5cm]
& \bullet \arrow{ld}[swap, sloped, pos=0.3]{\approx} \arrow{r} \arrow[tail]{dd}[sloped, anchor=south]{\approx} & \bullet \arrow[tail]{dd}[sloped, anchor=north]{\approx} \\
s & & & t \arrow{lu}[sloped, swap, pos=0.7]{\approx} \arrow{ld}[sloped, pos=0.7]{\approx} \\
& \bullet \arrow{r} \arrow[two heads]{lu}[sloped, pos=0.3]{\approx} & \bullet
\end{tikzcd} \right)^{\textup{p.o.}}
\xra{\varphi_3} \left( \begin{tikzcd}
s & \bullet \arrow[two heads]{l}[swap]{\approx} \arrow{r} & \bullet & t \arrow{l}[swap]{\approx}
\end{tikzcd} \right)
\xra{\varphi_4} \left( \begin{tikzcd}[row sep=0.5cm]
& & \bullet \arrow[two heads]{dd}[sloped, anchor=north]{\approx} \\
s & & & t \arrow[tail]{lu}[sloped, swap, pos=0.7]{\approx} \arrow{ld}[sloped, pos=0.7]{\approx} \\
& \bullet \arrow{r} \arrow[two heads]{lu}[sloped, pos=0.3]{\approx} & \bullet
\end{tikzcd} \right) \]
\[ \xra{\varphi_5} \left( \begin{tikzcd}[row sep=0.5cm]
& \bullet \arrow[two heads]{ld}[swap, sloped, pos=0.3]{\approx} \arrow{r} \arrow[two heads]{dd}[sloped, anchor=south]{\approx} & \bullet \arrow[two heads]{dd}[sloped, anchor=north]{\approx} \\
s & & & t \arrow[tail]{lu}[sloped, swap, pos=0.7]{\approx} \arrow{ld}[sloped, pos=0.7]{\approx} \\
& \bullet \arrow{r} \arrow[two heads]{lu}[sloped, pos=0.3]{\approx} & \bullet
\end{tikzcd} \right)^{\textup{p.b.}}
\xra{\varphi_6} \left( \begin{tikzcd}
s & \bullet \arrow[two heads]{l}[swap, sloped]{\approx} \arrow{r} & \bullet & t \arrow[tail]{l}[swap]{\approx}
\end{tikzcd} \right) = \tilde{\word{3}}, \]
in which
\begin{itemizesmall}
\item the maps $\varphi_1$, $\varphi_2$, $\varphi_4$, and $\varphi_5$ are the evident inclusions, and
\item the maps $\varphi_3$ and $\varphi_6$ are given by collapsing vertically.
\end{itemizesmall}

We now prove that each of these maps induces a weak equivalence upon application of $\Nerve\left(\Funp^\dec(-,\D)^\bW\right)$.  The arguments can be grouped as follows.
\begin{itemize}

\item The fact that the maps $\varphi_1$ and $\varphi_4$ induce weak equivalences follows from \cref{abstractify DKFunc 8.1}\ref{abstractify for model cat}.

\item The maps $\varphi_2$ and $\varphi_5$ induce acyclic fibrations in $s\Set_\KQ$, since
\begin{itemize}
\item $\D$ is finitely bicomplete,
\item limits and colimits are unique up to unique isomorphism, and
\item the subcategories $(\bW \cap \bC)_\D, (\bW \cap \bF)_\D \subset \D$ are respectively closed under pushout and pullback
\end{itemize}
(see e.g.\! \cite[Proposition 4.3.2.15]{LurieHTT}).

\item Note that the maps $\varphi_3$ and $\varphi_6$ admit respective sections $\psi_3$ and $\psi_6$ in $\Modelpdec$.  Moreover, there are evident doubly-pointed natural weak equivalences $\id_{\I_2} \ra \psi_3 \varphi_3$ and $\psi_6 \varphi_6 \ra \id_{\I_5}$.  Hence, it follows from \cref{natural weak equivalences of model diagrams corepresent natural transformations} that $\varphi_3$ and $\varphi_6$ induce homotopy equivalences in $s\Set_\KQ$.

\end{itemize}

The proof of item \ref{special-3 to 3 for collage} is similar, but requires some modification: we will show that the above sequence induces weak equivalences in $s\Set_\KQ$ upon application of $\Nerve \left( \Funp ( - , (\C^c+\D^f) )^\bW \right)$ (note the lack of decorations).\footnote{In fact, the following argument also works for $\D$, but its additional complexity would just have been confusingly unnecessary if we had provided it above.}

First of all, note that since we have assumed that $x \in \C^c$ and $y \in \D^f$, then all maps to $(\C^c+\D^f)$ in $\Modelp$ from all doubly-pointed model diagrams in the above sequence must have that the unmarked arrows select bridge arrows, with everything to the left mapping into $\C^c$ and everything to the right mapping into $\D^f$.

We now show that restriction along $\varphi_2$ induces a weak equivalence
\[ \Nerve \left( \Funp ( \I_2 , (\C^c+\D^f) )^\bW \right) \we \Nerve \left( \Funp (\I_1 , (\C^c+\D^f))^\bW \right) . \]
For this, we use the analogously defined object $(\C^c+\D) \in \Modelp$, and we define a diagram
\[ \I_{2a} \xra{\kappa_1} \I_{2b} \xla{\kappa_2} \I_{2c} \xra{\kappa_3} \I_{2d} \xla{\kappa_4} \I_{2e} \]
in $\Modelpdec$, given by
\[ \left( \begin{tikzcd}[row sep=0.5cm]
& \bullet \arrow{ld}[swap, sloped, pos=0.3]{\approx} \arrow{r} \arrow[tail]{dd}[sloped, anchor=south]{\approx} & \bullet^f \\
s & & & t \arrow{lu}[sloped, swap, pos=0.7]{\approx} \\
& \bullet \arrow[two heads]{lu}[sloped, pos=0.3]{\approx}
\end{tikzcd} \right)
\xra{\kappa_1} \left( \begin{tikzcd}[row sep=0.5cm]
& \bullet \arrow{ld}[swap, sloped, pos=0.3]{\approx} \arrow{r} \arrow[tail]{dd}[sloped, anchor=south]{\approx} & \bullet^f \arrow[tail]{dd}[sloped, anchor=north]{\approx} \\
s & & & t \arrow{lu}[sloped, swap, pos=0.7]{\approx} \arrow{ld}[sloped, pos=0.7]{\approx} \\
& \bullet \arrow{r} \arrow[two heads]{lu}[sloped, pos=0.3]{\approx} & \bullet
\end{tikzcd} \right)^{\textup{p.o.}} 
\xla{\kappa_2} \left( \begin{tikzcd}[row sep=0.5cm]
& \bullet \arrow{ld}[swap, sloped, pos=0.3]{\approx} \arrow{r} \arrow[tail]{dd}[sloped, anchor=south]{\approx} & \bullet^f \arrow[tail]{dd}[sloped, anchor=north]{\approx} \\
s & & & t \arrow{lu}[sloped, swap, pos=0.7]{\approx} \arrow{ld}[sloped, pos=0.7]{\approx} \\
& \bullet \arrow{r} \arrow[two heads]{lu}[sloped, pos=0.3]{\approx} & \bullet
\end{tikzcd} \right) \]
\[ \xra{\kappa_3} \left( \begin{tikzcd}[row sep=0.5cm]
& \bullet \arrow{ld}[swap, sloped, pos=0.3]{\approx} \arrow{r} \arrow[tail]{dd}[sloped, anchor=south]{\approx} & \bullet^f \arrow[tail]{dd}[sloped, anchor=north]{\approx} \\
s & & & t \arrow{lu}[sloped, swap, pos=0.7]{\approx} \arrow{ld}[sloped, pos=0.7]{\approx} \arrow{lddd}[sloped, pos=0.6]{\approx} \\
& \bullet \arrow{r} \arrow{rdd} \arrow[two heads]{lu}[sloped, pos=0.3]{\approx} & \bullet \arrow[tail]{dd}[sloped, anchor=north]{\approx} \\ \\
& & \bullet^f
\end{tikzcd} \right)
\xla{\kappa_4} \left( \begin{tikzcd}[row sep=0.5cm]
& \bullet \arrow{ld}[swap, sloped, pos=0.3]{\approx} \arrow{r} \arrow[tail]{dd}[sloped, anchor=south]{\approx} & \bullet^f \arrow[tail]{dddd}[sloped, anchor=north]{\approx} \\
s & & & t \arrow{lu}[sloped, swap, pos=0.7]{\approx} \arrow{lddd}[sloped, pos=0.6]{\approx} \\
& \bullet \arrow{rdd} \arrow[two heads]{lu}[sloped, pos=0.3]{\approx} \\ \\
& & \bullet^f
\end{tikzcd} \right) , \]
in which all maps are the evident inclusions.  Then, we proceed by the following arguments.
\begin{itemize}

\item There is an evident isomorphism
\[ \Funp(\I_1 , (\C^c+\D^f) )^\bW \cong \Funpdec ( \I_{2a} , (\C^c+\D))^\bW . \]

\item The map $\kappa_1$ induces an acyclic fibration 
\[ \Nerve \left( \Funpdec ( \I_{2b} , (\C^c+\D))^\bW \right)  \wfibn \Nerve \left( \Funpdec ( \I_{2a} , (\C^c+\D))^\bW \right) \]
in $s\Set_\KQ$ for the same reasons that $\varphi_2$ and $\varphi_5$ induced them above.

\item The map $\kappa_2$ induces a weak equivalence
\[ \Nerve \left( \Funpdec(\I_{2b} , (\C^c+\D))^\bW \right) \we \Nerve \left( \Funpdec(\I_{2c},(\C^c+\D))^\bW \right) \]
in $s\Set_\KQ$ since it is the nerve of a functor which admits a right adjoint.  (Using the dual of the characterization of \cite[Chapter IV, \sec 1, Theorem 2(ii)]{MacLaneCWM}, to obtain such a right adjoint suffices to choose a coreflection in $\Funpdec(\I_{2b} , (\C^c+\D))^\bW$ of each object of $\Funpdec(\I_{2c} , (\C^c+\D))^\bW$, along with the corresponding component of the counit: to obtain a coreflection we can take a pushout of the span defined by our object, and the corresponding component of the counit will then be the canonical map.)

\item The map $\kappa_3$ induces a weak equivalence
\[ \Nerve \left( \Funpdec ( \I_{2d} , (\C^c+\D))^\bW \right) \we \Nerve \left( \Funpdec ( \I_{2c} , (\C^c+\D))^\bW \right) \]
by \cref{adding a fibrant replacement by an acyclic cofibration to a model diagram gives a weak equivalence in collage} below.

\item The inclusion $\kappa_4$ admits a retraction $\lambda_4$, given by taking the additional object in $\I_{2d}$ to the bottommost object in $\I_{2e}$.  Moreover, there is an evident doubly-pointed natural weak equivalence $\id_{\I_{2d}} \ra \kappa_4 \lambda_4$.  Hence, it follows from \cref{natural weak equivalences of model diagrams corepresent natural transformations} that $\kappa_4$ induces a homotopy equivalence
\[ \Nerve \left( \Funpdec ( \I_{2d} , (\C^c+\D))^\bW \right) \we \Nerve \left( \Funpdec ( \I_{2e} , (\C^c+\D))^\bW \right) \]
in $s\Set_\KQ$.

\item There is an evident isomorphism
\[ \Funpdec (\I_{2e} , (\C^c+\D))^\bW \cong \Funp(\I_2 , (\C^c+\D^f))^\bW . \]

\end{itemize}
As these weak equivalences are compatible with the map
\[ \Nerve \left( \Funp ( \I_2 , (\C^c+\D^f) )^\bW \right) \ra \Nerve \left( \Funp (\I_1 , (\C^c+\D^f))^\bW \right) \]
induced by restriction along $\varphi_2$ (in the sense that adding in the evident inclusion $\I_{2a} \ra \I_{2e}$ in $\Modelpdec$, to which when we apply $\Nerve \left( \Funpdec ( - , (\C^c+\D))^\bW \right)$ yields an isomorphic map to the one given by applying $\Nerve \left( \Funp ( - , (\C^c+\D^f))^\bW \right)$ to $\I_1 \xra{\varphi_2} \I_2$, yields a commutative diagram in $\Modelp$), we see that it is indeed a weak equivalence as well.

From here, a nearly identical dual argument (this time using $(\C+\D^f) \in \Modelp$) shows that restriction along $\varphi_5$ also induces a weak equivalence
\[ \Nerve \left( \Funp ( \I_5 , (\C^c+\D^f) )^\bW \right) \we \Nerve \left( \Funp (\I_4 , (\C^c+\D^f))^\bW \right) \]
So we have proved that upon application of $\Nerve\left(\Funp(-,(\C^c+\D^f))^\bW \right)$, the maps $\varphi_2$ and $\varphi_5$ induce weak equivalences in $s\Set_\KQ$.  That $\varphi_1$ and $\varphi_4$ induce weak equivalences follows from \cref{abstractify DKFunc 8.1}\ref{abstractify for collage}, and that $\varphi_3$ and $\varphi_6$ induce weak equivalences follows from the same argument as given above in the proof of item \ref{special-3 to 3 for model cat}.
\end{proof}

We now prove a lemma that was used in the proof of \cref{special-3 to 3}.

\begin{lem}\label{adding a fibrant replacement by an acyclic cofibration to a model diagram gives a weak equivalence in collage}
Choose any doubly-pointed model diagram $\I \in \Modelp$, choose an object $a \in \I$ which is connected by a zigzag in $\bW_\I$ to the object $t \in \I$, and use it to define $\J \in \Modelp$ by freely adjoining a new fibrant object and an acyclic cofibration $a \wcofibn \bullet^f$ to it from $a$.\footnote{That is, if $\I$ already has an object marked as terminal then we add a new object equipped with a fibration to it; otherwise we add both.}  Then, assuming that the target object of $(\C^c+\D)$ lives in the subcategory $\D \subset (\C^c+\D)$, the evident inclusion $\I \xra{\varphi} \J$ induces a weak equivalence
\[ \Nerve \left( \Funpdec ( \J , (\C^c+\D) )^\bW \right) \xra{\approx} \Nerve \left( \Funpdec ( \I , (\C^c+\D) )^\bW \right) \]
in $s\Set_\KQ$.
\end{lem}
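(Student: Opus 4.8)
The plan is to mirror the proof of the analogous statement in which the adjoined arrow $a \wcofibn \bullet^f$ is merely a weak equivalence rather than an acyclic cofibration, modified in three respects: we work over $(\C^c+\D)$ instead of $\D$, we carry the decorations along, and --- the only genuinely new point --- we replace the categories of ``weak fibrant replacements'' appearing there by categories of ``fibrant replacements by an acyclic cofibration''. The first step is to observe that the hypothesis reduces everything to the genuine model category $\D$: since $a$ is connected to $t$ by a zigzag in $\bW_\I$ and no bridge arrow of $(\C^c+\D)$ is a weak equivalence, any morphism $\I \to (\C^c+\D)$ (or $\J \to (\C^c+\D)$) in $\Modelp$ must carry $a$ --- along with everything weakly equivalent to it, and, by hypothesis, $t$ --- into the subcategory $\D \subset (\C^c+\D)$. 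Hence for any such $b \colon \I \to (\C^c+\D)$ the object $b(a)$ lies in $\D$, the fibrancy decoration on $\bullet^f \in \J$ forces its image into $\D^f$, and the adjoined arrow is sent to an acyclic cofibration of $\D$; all of the model-categorical maneuvers below accordingly take place inside $\D$.

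Write $\varphi^* \colon \mathbf{B}_1 \to \mathbf{B}_2$ for the restriction functor induced by $\varphi \colon \I \hookrightarrow \J$, with $\mathbf{B}_1 = \Funpdec(\J,(\C^c+\D))^\bW$ and $\mathbf{B}_2 = \Funpdec(\I,(\C^c+\D))^\bW$; we must show that $\Nerve(\varphi^*)$ is a weak equivalence in $s\Set_\KQ$. By the dual of \cite[Theorem A]{QuillenAKTI} it suffices to show that for each $b \in \mathbf{B}_2$ the comma category $\mathbf{B}_3 := \mathbf{B}_1 \times_{\mathbf{B}_2} (\mathbf{B}_2)_{b/}$ --- whose objects are pairs $(c,\eta)$ of a diagram $c \in \mathbf{B}_1$ and a doubly-pointed natural weak equivalence $\eta \colon b \to \varphi^*(c)$ --- has weakly contractible nerve. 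Let $\mathbf{B}_3' \subset \mathbf{B}_3$ be the full subcategory on those $(c,\eta)$ with $\eta = \id_b$; unwinding the definitions (and using the previous paragraph) identifies $\mathbf{B}_3'$ with the category whose objects are acyclic cofibrations $b(a) \wcofibn z$ of $\D$ with $z$ fibrant and whose morphisms are maps under $b(a)$ (which are automatically weak equivalences). It then remains to prove: (a) that $\mathbf{B}_3'$ has weakly contractible nerve; and (b) that $\mathbf{B}_3' \hookrightarrow \mathbf{B}_3$ induces a weak equivalence on nerves. For (b) one applies \cite[Theorem A]{QuillenAKTI} once more, the point being that for $(c,\eta) \in \mathbf{B}_3$ the comma category $\mathbf{B}_3' \times_{\mathbf{B}_3} (\mathbf{B}_3)_{/(c,\eta)}$ is isomorphic to the category of factorizations of the weak equivalence $g \colon b(a) \to c(\bullet^f)$ (the composite of $\eta_a$ with the image of the adjoined arrow) as an acyclic cofibration $b(a) \wcofibn z$ into a fibrant object followed by a weak equivalence $z \to c(\bullet^f)$, which must in turn be shown to have weakly contractible nerve.

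I expect (a), together with the contractibility of the factorization categories occurring in (b), to be the main obstacle. These are the acyclic-cofibration counterparts of the instances of the dual of \cite[Theorem A.3.2(1)]{Hinich} that, in the weak-equivalence case, compute the nerves of categories of fibrant replacements; but because lifts against the fibration $z \fibn c(\bullet^f)$ are not unique, the categories occurring here lack the strict terminal objects available in that case, so a slightly more careful argument is needed. I would obtain them by adapting the argument behind \cite[Theorem A.3.2(1)]{Hinich} (equivalently, the resolution construction of \cite[6.7]{DKFunc}) to the weak factorization system $\bigl((\bW \cap \bC)_\D, \bF_\D\bigr)$ on $\D$, using the finite bicompleteness of $\D$ and the stability of acyclic cofibrations under pushout --- exactly the data already marshalled in the proof of \cref{special-3 to 3}. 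The remaining points --- that every map in sight is decoration-respecting, and that the comma categories above admit the stated descriptions --- are routine and follow the weak-equivalence template essentially verbatim.
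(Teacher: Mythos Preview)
Your proposal is correct and follows essentially the same strategy as the paper: both reduce via two nested applications of Quillen's Theorem~A to the contractibility of $\mathbf{B}_3'$ and of the comma categories $\mathbf{B}_4 = \mathbf{B}_3' \times_{\mathbf{B}_3} (\mathbf{B}_3)_{/(c,\eta)}$, with the same identifications of these categories. The only difference lies in how the final contractibilities you flag as ``the main obstacle'' are dispatched: rather than adapting the resolution argument to the weak factorization system $((\bW\cap\bC)_\D,\bF_\D)$ as you propose, the paper recognizes $\mathbf{B}_4$ as the category $\bW^{cf}_{\D_{b(a)/}} \times_{\bW_{\D_{b(a)/}}} (\bW_{\D_{b(a)/}})_{/c(\bullet^f)}$ of bifibrant replacements of the fibrant object $c(\bullet^f)$ in the \emph{slice} model category $\D_{b(a)/}$ and invokes \cite[Theorem~A.3.2(2)]{Hinich} directly; $\mathbf{B}_3'$ is handled either by the same device applied to the initial object of $\D_{b(a)/}$, or via a short auxiliary lemma showing that a special simplicial resolution $\bD^{op}\to(\bW^f_\D)_{b(a)\dcofibn}$ is homotopy right cofinal. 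Your route works, but the slice reinterpretation lets one cite existing results off the shelf rather than rerunning them.
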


\begin{proof}
To ease notation, let us write this map as $\Nerve(\mathbf{B}_1) \ra \Nerve(\mathbf{B}_2)$.  Then, appealing to the dual of \cite[Theorem A]{QuillenAKTI}, it suffices to prove that for any $b \in \mathbf{B}_2$, the comma category
\[ \mathbf{B}_3 = \mathbf{B}_1 \times_{\mathbf{B}_2} (\mathbf{B}_2)_{b/} \]
has weakly contractible nerve.

For this, define the subcategory $\mathbf{B}_3' \subset \mathbf{B}_3$ on those objects $(c , b \ra \varphi^*(c)) \in \mathbf{B}_3$ such that the doubly-pointed natural weak equivalence $b \ra \varphi^*(c)$ is actually $\id_b$.  Then $\mathbf{B}_3'$ is isomorphic to the category
\[ (\bW^f_\D)_{b(a) \dcofibn} = \bW^f_\D \times_{\bW_\D} (\bW_\D)_{b(a) \dcofibn} \]
 of acyclic cofibrations from $b(a)$ to a fibrant object, which has weakly contractible nerve by applying the dual of \cite[Theorem A]{QuillenAKTI} to \cref{similar to DKFunc 6.11} (since $\Nerve(\bD^{op})$ is weakly contractible).\footnote{This statement also follows from applying the dual of \cite[Theorem A.3.2(2)]{Hinich} to the initial object of the model category $\D_{b(a)/}$.}  On the other hand, to show that the map $\Nerve(\mathbf{B}_3') \ra \Nerve(\mathbf{B}_3)$ is a weak equivalence, again appealing to \cite[Theorem A]{QuillenAKTI}, it suffices to prove that for any $(c,b \ra \varphi^*(c)) \in \mathbf{B}_3$, the comma category
\[ \mathbf{B}_4 = \mathbf{B}_3' \times_{\mathbf{B}_3} (\mathbf{B}_3)_{/(c, b \ra \varphi^*(c))} \]
has weakly contractible nerve.

Now, an object of $\mathbf{B}_4$ is given by the pair of an object $(c' , b \xra{=} \varphi^*(c')) \in \mathbf{B}_3'$ and a morphism $(c' , b \xra{=} \varphi^*(c')) \ra (c , b\ra \varphi^*(c))$ in $\mathbf{B}_3$.  Unwinding the definitions, we see that the data of such an object is precisely that of a factorization of the composite $b(a) \xra{=} c'(a) \we c(a) \wcofibn c(\bullet^f)$ in $\D \subset (\C^c+\D)$ through some composite $b(a) \xra{=} c'(a) \wcofibn c'(\bullet^f) \we c(\bullet^f)$, i.e.\! the specification of the upper right composite in a commutative square
\[ \begin{tikzcd}
b(a) \arrow[tail]{r}{\approx} \arrow{d}[sloped, anchor=north]{\approx} & c'(\bullet^f) \arrow{d}[sloped, anchor=south]{\approx} \\
c(a) \arrow[tail]{r}[swap]{\approx} & c(\bullet^f)
\end{tikzcd} \]
in $\D$.  Hence the category $\mathbf{B}_4$ is isomorphic to the category
\[ \bW^{cf}_{\D_{b(a)/}} \times_{\bW_{\D_{b(a)/}}} (\bW_{\D_{b(a)/}})_{/c(\bullet^f)} \]
of (left) replacements of the fibrant object $c(\bullet^f) \in \D_{b(a)/}$ by a bifibrant object, which has weakly contractible nerve by \cite[Theorem A.3.2(2)]{Hinich}.
\end{proof}

We now prove a lemma that was used in the proof of \cref{adding a fibrant replacement by an acyclic cofibration to a model diagram gives a weak equivalence in collage}.

\begin{lem}\label{similar to DKFunc 6.11}
Any object $y \in \D$ admits a special simplicial replacement $y_\bullet \in s(\bW^f_\D)$ (as in \cite[4.3 and Remark 6.8]{DKFunc}), and for any such choice, the corresponding map $\bD^{op} \xra{y_\bullet} (\bW^f_\D)_{y \dcofibn}$ is homotopy right cofinal.
\end{lem}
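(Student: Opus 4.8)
The plan is to establish the two assertions in turn, following \cite[\sec 4 and \sec 6]{DKFunc} throughout; the cofinality statement is a mild variant of \cite[6.11]{DKFunc}. For existence, I would build $y_\bullet$ by the inductive latching-to-matching argument of \cite[6.7]{DKFunc}, with the relevant factorization taken to be of the form $\wcofibn\fibn$ as corrected in the footnote to \cref{abstractify DKFunc 8.1}: given $y_\bullet$ in Reedy degrees $<n$ together with a compatible augmentation from the constant simplicial object at $y$, the $n$-th matching object $M_n$ is available (using finite bicompleteness of $\D$), and one factors the canonical map $y\to M_n$ as $y\wcofibn y_n\fibn M_n$ (using the factorization axiom, with no functoriality needed). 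Then each $y_n$ is fibrant, since a Reedy fibrant simplicial object has fibrant entries, so $y_\bullet\in s(\bW^f_\D)$; the augmentation provides the acyclic cofibrations $y\wcofibn y_n$; and the face and degeneracy maps $y_n\to y_m$ commute with these, hence are weak equivalences by two-out-of-three. Thus $y_\bullet$ does define a functor $\bD^{op}\to(\bW^f_\D)_{y\dcofibn}$ lifting the constant functor at $y$.

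For homotopy right cofinality, by (the dual of) Quillen's Theorem A \cite[Theorem A]{QuillenAKTI} it suffices to show that for every object $\zeta=(y\wcofibn z)\in(\bW^f_\D)_{y\dcofibn}$ the comma category
\[ \mathbf{E}_\zeta \;=\; \bD^{op}\times_{(\bW^f_\D)_{y\dcofibn}}\left((\bW^f_\D)_{y\dcofibn}\right)_{\zeta/} \]
has weakly contractible nerve. Unwinding the definition $(\bW^f_\D)_{y\dcofibn}=\bW^f_\D\times_{\bW_\D}(\bW_\D)_{y\dcofibn}$, a morphism $\zeta\to y_n$ in it is exactly a map $z\to y_n$ in $\D$ under $y$ -- which is automatically a weak equivalence by two-out-of-three -- i.e.\ exactly a morphism $\zeta\to y_n$ in the under-category $\D_{y/}$. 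Hence $\mathbf{E}_\zeta$ is, up to passage to its opposite (which does not change the homotopy type of the nerve), the category of simplices of the simplicial set $S$ with $S_n=\hom_{\D_{y/}}(\zeta,y_n)$, so that $\Nerve(\mathbf{E}_\zeta)\simeq S$ (the nerve of the category of simplices of a simplicial set recovering its homotopy type).

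It then remains to prove that $S=\hom_{\D_{y/}}(\zeta,y_\bullet)$ is weakly contractible, and here I would work in the model category $\D_{y/}$ -- which lies within our standing hypotheses, since $\D_{y/}$ is finitely bicomplete and its cofibrations, fibrations, and weak equivalences, and hence the Reedy fibrancy of simplicial objects (the matching objects being finite limits), are all detected on underlying objects in $\D$. There $\zeta$ is cofibrant, the unique map $\id_y\to\zeta$ from the initial object being the cofibration $y\wcofibn z$, while $y_\bullet$ equipped with its augmentation is a special simplicial resolution of $\id_y$, being Reedy fibrant and levelwise weakly equivalent to the constant simplicial object at $\id_y$ via the maps $y\wcofibn y_n$. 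Consequently $S$ is one of the standard models for the homotopy function complex $\mathrm{map}^h_{\D_{y/}}(\zeta,\id_y)$ (cf.\ \cite[\sec 4]{DKFunc}); and this is contractible because $\id_y$ is initial: $\mathrm{map}^h$ carries weak equivalences between cofibrant objects to weak equivalences, so $\mathrm{map}^h_{\D_{y/}}(\zeta,\id_y)\simeq\mathrm{map}^h_{\D_{y/}}(\id_y,\id_y)$, and maps out of an initial object are unique. I expect the main obstacle to be bookkeeping rather than anything conceptual: checking that $\D_{y/}$ and its Reedy structure behave as claimed within the paper's framework, and transporting the homotopy-function-complex machinery of \cite[\sec 4]{DKFunc} from $\D$ to $\D_{y/}$ -- though one could equally rerun the proof of \cite[6.11]{DKFunc} verbatim in $\D_{y/}$, that argument being itself just an application of Quillen's Theorem A together with the existence of resolutions.
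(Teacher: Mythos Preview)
Your proof is correct but takes a more hands-on route than the paper's. For existence, both you and the paper rely on \cite[Proposition 4.5 and 6.7]{DKFunc}; you spell out the inductive construction while the paper simply cites it. For cofinality, the paper does not rerun Theorem A at all: it observes that \cite[Proposition 6.11]{DKFunc} already establishes homotopy right cofinality of the composite of $\bD^{op}\xra{y_\bullet}(\bW^f_\D)_{y\dcofibn}$ with a certain fully faithful inclusion, and then invokes the general fact that if a composite of functors is homotopy right cofinal and the second functor is fully faithful, then the first functor is homotopy right cofinal as well. Your approach --- passing to the model category $\D_{y/}$, identifying each comma category with the category of simplices of a homotopy function complex out of a cofibrant object into a simplicial resolution of the initial object, and concluding contractibility --- is essentially a direct reproof of (the relevant variant of) \cite[6.11]{DKFunc} itself, as you yourself note at the end. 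The paper's route is shorter and sidesteps the Reedy and model-structure bookkeeping in $\D_{y/}$; yours is more self-contained and makes the underlying mechanism explicit.
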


\begin{proof}
The first statement is just \cite[Proposition 4.5 and 6.7]{DKFunc}.  The second statement follows from combining \cite[Proposition 6.11]{DKFunc} with the following general fact: if a composite of functors is homotopy right cofinal and the second functor is fully faithful, then the first functor is also homotopy right cofinal.
\end{proof}

We now move on towards proving the second weak equivalence.  Purely as a matter of terminology, we begin with a slight variant on \cite[Definition 3.1]{LowMG}, which is in turn a slight variant on the original definition of a ``homotopy calculus of fractions'' given in \cite[6.1(i)]{DKCalc}.

\begin{defn}\label{define homotopical three-arrow calculus}
Let $(\R,\bW_\R)$ be a relative category, and let $x,y \in \R$.  We say that the relative category $(\R,\bW_\R)$ admits a \bit{homotopical three-arrow calculus} with respect to $x$ and $y$ if for all $i,j \geq 1$, the evident map
\[ \Nerve \left( \Funp ( [\bW^{-1} ; \any^{\circ i} ; \any^{\circ j} ; \bW^{-1}] , \R )^\bW \right) \ra \Nerve \left( \Funp ( [\bW^{-1} ; \any^{\circ i} ; \bW^{-1} ; \any^{\circ j} ; \bW^{-1}] , \R )^\bW \right) \]
is a weak equivalence in $s\Set_\KQ$.
\end{defn}

This notion is useful for the following reason (which is where it gets its name).

\begin{prop}\label{homotopical three-arrow calculus helps compute hammock sset}
If $(\R,\bW_\R)$ admits a homotopical three-arrow calculus with respect to $x$ and $y$, then the map
\[ \Nerve \left( \Funp ( \word{3} , \R)^\bW \right) \ra \hom_{\ham(\R)}(x,y) \]
is a weak equivalence in $s\Set_\KQ$.
\end{prop}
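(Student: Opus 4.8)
The plan is to deduce this from the colimit presentation of the hammock hom-object recorded above,
\[ \colim_{\word{m} \in \II} \Nerve\left( \Funp(\word{m},\R)^\bW \right) \xra{\cong} \hom_{\ham(\R)}(x,y) , \]
with $\word{m}$ ranging over the category $\II$ of zigzag types and the transition maps the various reduction maps (composing runs of consecutive same-direction arrows, contracting columns of identities), which are in particular monomorphisms of simplicial sets. The first step is to check that, because of this, the colimit presents a homotopy colimit --- one filters $\II$ by the number of arrows in a zigzag type and argues essentially as in the reduced-hammock analysis of \cite[2.1]{DKCalc} --- so that one is free to compute it by homotopy-invariant means; the goal then becomes to identify the $\word{3}$-indexed term, via the canonical structure map, with this homotopy colimit.

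The second step --- the heart of the proof --- is an explicit comparison modeled on Dwyer--Kan's treatment of a homotopy calculus of (left) fractions \cite[\S 6]{DKCalc}, in its three-arrow variant (cf.\ \cite{LowMG}) and on \cite[\S\S 7--8]{DKFunc}. Note that, for a general relative category, neither $\{\word{3}\}$ nor the subcategory of $\leq 3$-arrow zigzag types is homotopy cofinal in $\II$; the hypothesis is exactly the extra input that lets every zigzag type's contribution be replaced up to homotopy by the $\word{3}$-term. One separates the comparison into two kinds of moves. The \emph{soft} moves --- adjoining identity weak equivalences at the ends of a zigzag, contracting a \emph{freely} adjoined weak equivalence, composing a run of consecutive weak equivalences --- induce weak equivalences on $\Nerve(\Funp(-,\R)^\bW)$ for the formal reasons already used in \cref{abstractify DKFunc 8.1} and \cref{natural weak equivalences of model diagrams corepresent natural transformations} (the auxiliary comma categories carry terminal objects or admit explicit natural weak equivalences, and one appeals to Quillen's Theorem~A \cite[Theorem A]{QuillenAKTI}). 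The remaining, non-soft moves are precisely the block-splitting maps $[\bW^{-1};\any^{\circ i};\any^{\circ j};\bW^{-1}] \ra [\bW^{-1};\any^{\circ i};\bW^{-1};\any^{\circ j};\bW^{-1}]$, which are weak equivalences by the very hypothesis of \cref{define homotopical three-arrow calculus}. Interleaving the two --- inserting internal weak equivalences to expose three-arrow pieces, then collapsing the resulting auxiliary structure via soft moves --- one connects each $\Nerve(\Funp(\word{m},\R)^\bW)$ to $\Nerve(\Funp(\word{3},\R)^\bW)$ through a chain of weak equivalences, and a careful check of compatibility with the $\II$-diagram structure then identifies the homotopy colimit with $\Nerve(\Funp(\word{3},\R)^\bW)$ via the map in the statement.

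The main obstacle is exactly the organization of that interleaving: one must choose suitable intermediate zigzag types, verify that every non-soft comparison actually needed is one of the specific words appearing in \cref{define homotopical three-arrow calculus} (so that the hypothesis is strong enough), and track the variance carefully, since $\II$ enters as $\II^{op} \subset \Modelp$ and the direction of the reduction maps must be reconciled with that of the maps of $\Funp$'s. These are combinatorial rather than conceptual difficulties --- once the bookkeeping is in place the use of the hypothesis is immediate --- but they account for essentially all of the work, just as in \cite[\S\S 7--8]{DKFunc}.
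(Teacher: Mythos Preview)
Your proposal is broadly on target in spirit, but the paper's own proof is much shorter: it simply cites \cite[Proposition 6.2]{DKCalc} together with \cite[Theorem 3.3(ii)]{LowMG}, which records that Dwyer--Kan's original argument for a homotopy calculus of fractions carries over verbatim to the three-arrow variant of \cref{define homotopical three-arrow calculus}. What you have written is, in effect, a sketch of that cited argument rather than an alternative to it: the ``soft moves'' you describe are exactly the reduction maps used in \cite[\S 6]{DKCalc}, and the ``non-soft'' block-splitting moves are where the hypothesis enters, just as you say.

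One point of caution: your first step, reinterpreting the colimit over $\II$ as a homotopy colimit and then computing it homotopy-invariantly, is not how the Dwyer--Kan argument actually proceeds, and it is not obvious that this reformulation goes through cleanly (the transition maps in the $\II$-diagram are monomorphisms, but $\II$ is not filtered, and the cofibrancy of the diagram in any standard sense would need justification). The original argument is more direct: one shows by hand that, under the calculus hypothesis, the canonical map from the $\word{3}$-term to the colimit is a weak equivalence, using the specific combinatorics of $\II$ and the reduction maps rather than any general homotopy-colimit machinery. Your second step already contains the ingredients of that direct argument, so the detour through homotopy colimits is both unnecessary and the least secure part of your plan. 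If you want to flesh this out rather than cite, follow \cite[Proposition 6.2]{DKCalc} (with the corrections and adaptation noted in \cite[Theorem 3.3(ii)]{LowMG}) directly.
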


\begin{proof}
This is essentially \cite[Proposition 6.2]{DKCalc}; that the proof carries over to the present setting is justified in \cite[Theorem 3.3(ii)]{LowMG}.\footnote{A few minor typos in the proof of \cite[Proposition 6.2]{DKCalc} are also corrected in the proof of \cite[Theorem 3.3(ii)]{LowMG}.}
\end{proof}

Using this language, we can now prove (a result which in light of \cref{homotopical three-arrow calculus helps compute hammock sset} will directly imply) the second weak equivalence.

\begin{prop}\label{the two cases have homotopical three-arrow calculi}
The doubly-pointed relative categories
\begin{enumerate}
\item\label{model cat has homotopical three-arrow calculus} $(\D,\bW_\D)$, where $x,y \in \D$ are any objects, and
\item\label{collage has homotopical three-arrow calculus} $(\C^c+\D^f,\bW_{\C^c+\D^f})$, where $x \in \C^c$ and $y \in \D^f$
\end{enumerate}
admit homotopical three-arrow calculi.
\end{prop}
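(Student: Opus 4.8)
The plan is to reduce the statement, via \cref{define homotopical three-arrow calculus}, to the claim that for each $i,j\geq 1$ the unique map of model words $[\bW^{-1};\any^{\circ i};\any^{\circ j};\bW^{-1}]\ra[\bW^{-1};\any^{\circ i};\bW^{-1};\any^{\circ j};\bW^{-1}]$ --- the one collapsing the middle $\bW^{-1}$ to an identity --- induces a weak equivalence in $s\Set_\KQ$ upon applying $\Nerve\left(\Funp(-,\R)^\bW\right)$, once with $\R=\D$ (and arbitrary $x,y\in\D$) and once with $\R=(\C^c+\D^f)$ (and $x\in\C^c$, $y\in\D^f$). As in the proof of \cref{special-3 to 3}, every manipulation will take place in a neighborhood of the newly inserted $\bW^{-1}$, with the flanking blocks $\any^{\circ i}$ and $\any^{\circ j}$ and the two outer $\bW^{-1}$'s merely coming along for the ride; in particular the argument will be uniform in $i,j\geq 1$, the hypothesis $i,j\geq 1$ being used precisely to guarantee that there is an arbitrary arrow on either side of the inserted weak equivalence, which is the ``room'' the argument requires.

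For item \ref{model cat has homotopical three-arrow calculus} I would factor this map through a finite zigzag of maps in $\Modelpdec$, patterned on the sequence $\word{3}\ra\I_1\ra\cdots\ra\tilde{\word{3}}$ (together with its auxiliary $\kappa$-subsequence) appearing in the proof of \cref{special-3 to 3}. Each map in the factorization will be of one of three types, each recognized as inducing a weak equivalence by a tool already in hand: \textbf{(i)} refining a weak equivalence into an acyclic cofibration followed by an acyclic fibration, handled by \cref{abstractify DKFunc 8.1}\ref{abstractify for model cat}; \textbf{(ii)} introducing an auxiliary pushout or pullback square absorbing the arbitrary arrow immediately to the left (resp.\ right) of the inserted $\bW^{-1}$, which induces an acyclic fibration of nerves because $\D$ is finitely bicomplete and the acyclic cofibrations (resp.\ acyclic fibrations) are closed under pushout (resp.\ pullback), exactly as for $\varphi_2$ and $\varphi_5$ there (cf.\ \cite[Proposition 4.3.2.15]{LurieHTT}); and \textbf{(iii)} collapsing a resulting dead-end square or arrow, which by \cref{natural weak equivalences of model diagrams corepresent natural transformations} induces a homotopy equivalence of nerves provided it admits a section together with a doubly-pointed natural weak equivalence between the identity and the relevant composite --- with possibly one additional adjoint-functor step, as with $\kappa_2$ there, where the functor in question acquires a one-sided adjoint by taking a pushout. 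Step \textbf{(ii)} is unavoidable: the inserted $\bW^{-1}$ is flanked by arbitrary arrows on both sides, so neither of its endpoints is free and no single collapse relates the two words directly; one must first detach the neighboring arrows by building auxiliary squares, just as the $\kappa$-sequence does.

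For item \ref{collage has homotopical three-arrow calculus} I would run the same argument inside $(\C^c+\D^f)$, following the collage half of the proof of \cref{special-3 to 3}: whenever a step calls for a pushout adjoining a new object near the bridge --- in particular an ``adjoin a fibrant object along an acyclic cofibration'' move --- one shuttles through the auxiliary relative category $(\C^c+\D)$ and invokes \cref{adding a fibrant replacement by an acyclic cofibration to a model diagram gives a weak equivalence in collage} (which itself rests on the special simplicial resolutions of \cref{similar to DKFunc 6.11}); dually, a pullback adjoining a cofibrant object along an acyclic fibration is handled by shuttling through $(\C+\D^f)$. The refinement steps of type \textbf{(i)} are supplied by \cref{abstractify DKFunc 8.1}\ref{abstractify for collage}, and the collapses of type \textbf{(iii)} go through verbatim via \cref{natural weak equivalences of model diagrams corepresent natural transformations}. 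One uses throughout that, since $x\in\C^c$ and $y\in\D^f$ and no bridge arrow is a weak equivalence, every morphism into $(\C^c+\D^f)$ from any of the model diagrams in play must send the unmarked middle arrows to bridge arrows, with everything left of the bridge landing in $\C^c$ and everything right of it landing in $\D^f$; hence all the co/limits that appear are formed within a single fiber (or are across-the-bridge ones, computed in $\C$ or $\D$), the auxiliary objects can be taken bifibrant, and the model-categorical inputs of the $\D$-argument apply fiberwise.

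The main obstacle is the bookkeeping rather than any conceptual difficulty: one must write down an explicit zigzag of decorated doubly-pointed model diagrams realizing the factorization, in which every map is visibly of type \textbf{(i)}, \textbf{(ii)}, or \textbf{(iii)}, and --- for item \ref{collage has homotopical three-arrow calculus} --- verify at each stage that the auxiliary objects introduced respect the bridge and that the co/limits used are genuinely fiberwise, so that \cref{abstractify DKFunc 8.1}, \cref{adding a fibrant replacement by an acyclic cofibration to a model diagram gives a weak equivalence in collage}, and the closure properties of the acyclic co/fibrations all remain applicable. Once such a factorization is in place, each individual step is routine, being either a direct appeal to one of the cited lemmas or an immediate adaptation of the corresponding step in the proof of \cref{special-3 to 3}.
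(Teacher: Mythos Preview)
Your plan follows the paper's approach closely, but there are two substantive points where your description diverges from what actually needs to happen.

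First, your type-(ii) step is stated too locally. You speak of ``introducing an auxiliary pushout or pullback square absorbing the arbitrary arrow immediately to the left (resp.\ right) of the inserted $\bW^{-1}$'', as though a single square on each side suffices. It does not: after refining the middle $\bW^{-1}$ via \cref{abstractify DKFunc 8.1}, the new vertex is attached only to its two neighbors, and a single pushout (or pullback) produces a dead end that still does not connect to the rest of the zigzag. The paper's map $\rho_2$ instead propagates the acyclic cofibration through \emph{all} $j$ arrows to the right via iterated pushouts and the acyclic fibration through \emph{all} $i$ arrows to the left via iterated pullbacks, producing an entire new bottom row $[\bW^{-1};\any^{\circ i};\any^{\circ j};\bW^{-1}]$. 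Correspondingly, the collapse $\rho_3$ requires not a single doubly-pointed natural weak equivalence but a \emph{span} $\id_{\J_2}\leftarrow\tau_3\rightarrow\rho_3\sigma_3$ (since the vertical maps in the pullback half and the pushout half point in opposite directions), which your type-(iii) description does not anticipate.

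Second, for item~\ref{collage has homotopical three-arrow calculus} your claim that a map of model diagrams ``must send the unmarked middle arrows to bridge arrows'' is wrong: exactly \emph{one} of the $i+j$ unmarked arrows becomes a bridge arrow, and its position $k\in\{1,\ldots,i+j\}$ is not fixed. The paper's proof begins by decomposing both source and target into components indexed by $k$ and then treats the cases $k\leq i$ and $k\geq i+1$ separately (and dually). This matters because the inserted middle $\bW^{-1}$ lies entirely in $\D^f$ when $k\leq i$ (resp.\ in $\C^c$ when $k\geq i+1$), and the propagation of squares in $\rho_2$ then crosses the bridge partway through the pullback (resp.\ pushout) half, at which point one must switch from the ``internal to $\D^f$'' argument to the full $(\C+\D^f)$-style argument (resp.\ dually). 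Your proposal gestures at both auxiliary objects $(\C^c+\D)$ and $(\C+\D^f)$ but does not recognize that which one is needed, and where, depends on $k$; without this decomposition the collage argument does not organize itself.
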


\begin{proof}
Again, we begin with item \ref{model cat has homotopical three-arrow calculus} since it is somewhat simpler to prove.  In this case, we define a diagram
\[ [\bW^{-1} ; \any^{\circ i} ; \bW^{-1} ; \any^{\circ j} ; \bW^{-1}] \xra{\rho_1} \J_1 \xra{\rho_2} \J_2 \xla{\rho_3} [\bW^{-1} ; \any^{\circ i} ; \any^{\circ j} ; \bW^{-1}] \]
in $\Modelpdec$, given by the evident inclusions
\[ [\bW^{-1} ; \any^{\circ i} ; \bW^{-1} ; \any^{\circ j} ; \bW^{-1}] = \left( \begin{tikzcd}
s \arrow[-]{rr}{[\bW^{-1} ; \any^{\circ i}]} & & \bullet & \bullet \arrow{l}[swap]{\approx} \arrow[-]{rr}{[\any^{\circ j} ; \bW^{-1}]} & & t
\end{tikzcd} \right) \]
\[ \xra{\rho_1} \left( \begin{tikzcd}[column sep=0.5cm]
s \arrow[-]{rrrr}{[\bW^{-1} ; \any^{\circ i}]} & & & & \bullet& & \bullet \arrow{ll}[swap]{\approx} \arrow[-]{rrrr}{[\any^{\circ j} ; \bW^{-1}]} \arrow[tail]{ld}[sloped, pos=0.7]{\approx} & & & & t \\
& & & & & \bullet \arrow[two heads]{lu}[sloped, pos=0.3]{\approx}
\end{tikzcd} \right) 
\]
\[ \xra{\rho_2} \left( \begin{tikzcd}[column sep=0.5cm]
s & & \bullet \arrow{ll}[swap]{\approx} \arrow{rr} & & \cdots \arrow{rr} & & \bullet & & \bullet \arrow{ll}[swap]{\approx} \arrow{rr} \arrow[tail]{ld}[sloped, pos=0.7]{\approx} & & \cdots \arrow{rr} & & \bullet \arrow[tail]{ld}[sloped, swap, pos=0.4]{\approx} & & t \arrow{ll}[swap]{\approx} \arrow[bend left=10]{llld}[sloped, pos=0.6]{\approx} \\
& & & \bullet \arrow{rr} \arrow[bend left=10]{lllu}[sloped, pos=0.4]{\approx} \arrow[two heads]{lu}[sloped, swap, pos=0.6]{\approx} & & \cdots \arrow{rr} & & \bullet \arrow{rr} \arrow[two heads]{lu}[sloped, pos=0.3]{\approx} & & \cdots \arrow{rr} & & \bullet
\end{tikzcd} \right)^{i \textup{ p.b.'s, } j \textup{ p.o.'s}} \]
\[ \xla{\rho_3} \left( \begin{tikzcd}[column sep=0.5cm]
s & & & & & & & & & & & & & & t \arrow[bend left=10]{llld}[sloped, pos=0.6]{\approx} \\
& & & \bullet \arrow{rr} \arrow[bend left=10]{lllu}[sloped, pos=0.4]{\approx} & & \cdots \arrow{rr} & & \bullet \arrow{rr} & & \cdots \arrow{rr} & & \bullet
\end{tikzcd} \right) =  [\bW^{-1} ; \any^{\circ i} ; \any^{\circ j} ; \bW^{-1}] . \]
We claim that this induces a diagram of weak equivalences in $s\Set_\KQ$ upon application of $\Nerve \left( \Funpdec ( - , \D)^\bW \right)$ in a way compatible with the map in \cref{define homotopical three-arrow calculus} (in the sense that adding in the evident map which corepresents it gives a commutative diagram in $\Modelpdec$).  We proceed by the following arguments.
\begin{itemize}

\item The map $\rho_1$ induces a weak equivalence by \cref{abstractify DKFunc 8.1}\ref{abstractify for model cat}.

\item The map $\rho_2$ induces an acyclic fibration in $s\Set_\KQ$ by repeatedly applying the argument for why the maps $\varphi_2$ and $\varphi_5$ induce them in the proof of \cref{special-3 to 3}\ref{special-3 to 3 for model cat}.

\item The inclusion $\rho_3$ admits a retraction $\sigma_3$ in $\Modelpdec$, given by collapsing the whole top row of $\J_2$ (besides the objects $s$ and $t$) down onto the lower row, so that the ``middle'' backwards weak equivalence in the top row gets sent to the identity map on the ``middle'' object of the bottom row.  If we define $\tau_3 \in \hom_{\Modelpdec}(\J_2,\J_2)$ to be the morphism which collapses the ``left half'' of $\J_2$ (besides the object $s$) down onto the lower row and leaves the ``right half'' unchanged, then there is an evident span
\[ \id_{\J_2} \la \tau_3 \ra \rho_3 \sigma_3 \]
of doubly-pointed natural weak equivalences.  Hence, it follows from \cref{natural weak equivalences of model diagrams corepresent natural transformations} that $\rho_3$ induces a homotopy equivalence in $s\Set_\KQ$.

\end{itemize}
Thus, $\D$ admits a homotopical three-arrow calculus (with respect to any double-pointing).

For item \ref{collage has homotopical three-arrow calculus}, as in the proof of \cref{special-3 to 3}\ref{special-3 to 3 for collage} we again modify the proof of item \ref{model cat has homotopical three-arrow calculus} by applying the functor $\Nerve \left( \Funp(- , (\C^c+\D^f))^\bW \right)$ to the given diagram (i.e.\! by ignoring decorations).

The first thing to observe here is that since no bridge arrows are weak equivalences, then the path components of both the source and the target of
\[ \Nerve \left( \Funp ( [\bW^{-1} ; \any^{\circ i} ; \any^{\circ j} ; \bW^{-1}] , (\C^c+\D^f) )^\bW \right) \ra \Nerve \left( \Funp ( [\bW^{-1} ; \any^{\circ i} ; \bW^{-1} ; \any^{\circ j} ; \bW^{-1}] , (\C^c+\D^f) )^\bW \right) \]
decompose according to where the (necessarily unique) bridge arrow lies among the $(i+j)$ possibilities, and moreover the map respects these decompositions.  For each $k \in \{ 1, \ldots, i+j\}$, let us use the ad hoc notation $\Funp^{\{k\}}(-,(\C^c+\D^f))^\bW$ to denote the respective subcategories on those zigzags where the $k\th$ copy of $\any$ gets sent to a bridge arrow; it suffices to show that the functor $\Nerve \left( \Funp^{\{k\}}(-,(\C^c+\D^f))^\bW \right)$ induces a weak equivalence in $s\Set_\KQ$ for each $k$.  We will focus on the case that $k \leq i$; the case that $k \geq i+1$ will follow from a completely dual argument.

\begin{itemize}

\item The fact that $\rho_1$ induces a weak equivalence in $s\Set_\KQ$ follows from \cref{abstractify DKFunc 8.1}\ref{abstractify for collage}.

\item To see that $\rho_2$ induces a weak equivalence, we will recycle arguments from the proof of \cref{special-3 to 3}\ref{special-3 to 3 for collage}.  Unlike in the proof of item \ref{model cat has homotopical three-arrow calculus}, we will need to separate the multiple steps in which we build the map $\J_1 \xra{\rho_2} \J_2$ into various cases.  Note that by our assumption that $k \leq i$, the objects of $\Funp^{\{k\}}(\J_1,(\C^c+\D^f))^\bW$ all select maps $\J_1 \ra (\C^c+\D^f)$ such that the composite in $\Model$ with the evident inclusion $[(\bW \cap \bF)^{-1} ; (\bW \cap \bC)^{-1}] \ra \J_1 \ra (\C^c+\D^f)$ lands in $\D^f \subset (\C^c+\D^f)$.

\begin{itemize}

\item We begin by adding the new acyclic cofibrations one by one, moving to the right and working in $\D^f \subset (\C^c+\D^f)$.  That these induce weak equivalences in $s\Set_\KQ$ upon applying $\Nerve \left( \Funp ( - , (\C^c+\D^f) )^\bW \right)$ follows from a nearly identical argument to the one that the same functor induces one upon application to $\varphi_2$.

\item Then, we begin adding the new acyclic fibrations, moving to the left and working in $\D^f \subset (\C^c+\D^f)$ until we reach the bridge arrow.  In this case, we can work entirely within $(\C^c+\D^f)$ (i.e.\! without using the auxiliary object $(\C^c+\D) \in \Modelp$) since the pullback of an acyclic fibration among fibrant objects will automatically be fibrant, and we can use completely dual arguments to the ones used to show that $\kappa_1$ and $\kappa_2$ induce weak equivalences to see that this again induces a weak equivalence.

\item Once we hit the bridge arrow and thereafter, we continue adding the new acyclic fibrations.  But now, since we will be adding objects that are in $\C^c$ that we will originally construct via pullback in $\C$, we use the full strength of the argument that the functor $\Nerve \left( \Funp ( - , (\C^c+\D^f) )^\bW \right)$ induces a weak equivalence in $s\Set_\KQ$ upon application to $\varphi_5$.

\end{itemize}

\item The fact that $\rho_3$ induces a weak equivalence follows from the same argument as given above in the proof of item \ref{model cat has homotopical three-arrow calculus}.

\end{itemize}
Thus, $(\C^c+\D^f)$ admits a homotopical three-arrow calculus with respect to any $x \in \C^c$ and $y \in \D^f$.
\end{proof}

We can now give a proof of the main ingredient in the proof of the main theorem, which is based on the arguments of \cite[\sec 7]{Mandell}.

\begin{proof}[Proof of \cref{main ingredient}]
We will prove that the map $\ham(\C^c+\D^f) \ra \cocart(\ham(F^c))$ is a weak equivalence in $((\Cat_{s\Set})_{\Bergner})_{/[1]}$; that the map $\ham(\C^c+\D^f) \ra \cart(\ham(G^f))$ is also a weak equivalence will follow from a completely dual argument.

First of all, over $0 \in [1]$ this map is an isomorphism $\ham(\C^c) \xra{\cong} \ham(\C^c)$, while over $1 \in [1]$ this map is given by the inclusion $\ham(\D^f) \hookra \ham(\D)$, which is a weak equivalence by \cref{inclusion of co/fibrants induces BK weak equivalence}.  We claim that for any $x \in \C^c$ and any $y \in \D^f$, the induced map
\[ \hom_{\ham(\C^c+\D^f)}(x,y) \ra \hom_{\cocart(\ham(F^c))}(x,y) = \hom_{\ham(\D)}(F^c(x),y) \]
is a weak equivalence in $s\Set_\KQ$.  From here, it will follow easily that the induced map on homotopy categories (i.e.\! under the functor $\ho : \Cat_{s\Set} \ra \Cat$, given locally by the product-preserving functor $\pi_0 : s\Set \ra \Set$) will be an equivalence of categories with target the analogously defined object $\cocart(\ho(F^c)) \in \Cat_{/[1]}$.  Hence, we will have shown that the map $\ham(\C^c+\D^f) \ra \cocart(\ham(F^c))$ is indeed a weak equivalence in $((\Cat_{s\Set})_{\Bergner})_{/[1]}$.

So, let $x \in \C^c$ and $y \in \D^f$.  Let $x^\bullet \in c(\bW^c_\C)$ be a special cosimplicial resolution of $x$, and let $y_\bullet \in s (\bW^f_\D )$ be a special simplicial resolution of $y$ (as in \cite[4.3 and Remark 6.8]{DKFunc}; the existence of these resolutions is guaranteed by \cite[Proposition 4.5 and 6.7]{DKFunc}).  Let us also define a simplicial set $M_\bullet \in s\Set$ with
\[ M_n = \coprod_{(\alpha,\beta) \in \hom_\Cat([n],\bD \times \bD^{op})} \hom_{(\C^c+\D^f)}(x^{\alpha(n)},y_{\beta(0)})
\cong
\coprod_{(\alpha,\beta) \in \hom_\Cat([n],\bD \times \bD^{op})} \hom_\D(F^c(x^{\alpha(n)}),y_{\beta(0)}) \]
and with structure maps as in \cite[\sec 7]{Mandell}.  Then, considering $(\C^c+\D^f) \in \Modelp$ via the objects $x,y \in (\C^c+\D^f)$ and considering $\D \in \Modelp$ via the objects $F^c(x),y \in \D$, we claim that we obtain a commutative diagram
\[ \begin{tikzcd}
\diag\left( \hom^\lw_{(\C^c+\D^f)}(x^\bullet,y_\bullet) \right) \arrow{r}{\cong} & \diag \left( \hom^\lw_{\D}(F^c(x^\bullet),y_\bullet) \right) \\
M_\bullet \arrow{r}{=} \arrow{u}[sloped, anchor=south]{\approx} \arrow{d}[sloped, anchor=north]{\approx} & M_\bullet \arrow{u}[sloped, anchor=north]{\approx} \arrow{d}[sloped, anchor=south]{\approx} \\
\Nerve \left( \Funp(\tilde{\word{3}} , (\C^c+\D^f))^\bW \right) \arrow{r} \arrow{d}[sloped, anchor=north]{\approx} & \Nerve \left( \Funp(\tilde{\word{3}} , \D)^\bW \right) \arrow{d}[sloped, anchor=south]{\approx} \\
\Nerve \left( \Funp(\word{3} , (\C^c+\D^f))^\bW \right) \arrow{r} \arrow{d}[sloped, anchor=north]{\approx} & \Nerve \left( \Funp(\word{3} , \D)^\bW \right) \arrow{d}[sloped, anchor=south]{\approx} \\
\hom_{\ham(\C^c+\D^f)}(x,y) \arrow{r} & \hom_{\ham(\D)}(F^c(x),y)
\end{tikzcd} \]
in $s\Set_\KQ$ (where the maps involving $M_\bullet$ are as described in \cite[\sec 7]{Mandell}), from which it will follow that the bottom map is indeed a weak equivalence as well.\footnote{There is a small mistake in the description of the first pair of vertical arrows in \cite[sec 7]{Mandell}: in the notation there, the map $f \in \hom_\bD([m],[p_m])$ should be given by $i \mapsto f_m \circ \cdots \circ f_{i+1}(p_i)$ for $i<m$ and $m \mapsto p_m$, and the map $g \in \hom_\bD([m],[q_0])$ should be given by $0 \mapsto 0$ and $i \mapsto g_1 \circ \cdots \circ g_i(0)$ for $i>0$.}  We argue as follows.
\begin{itemize}

\item The first right vertical arrow is a weak equivalence by \cite[Proposition 7.2]{Mandell}, and hence (using the evident fact that the top horizontal map is indeed an isomorphism) we obtain that the first pair of vertical arrows are weak equivalences.\footnote{Of course, the uppermost square is unnecessary from a strictly logical point of view, but it clarifies the connection between our proof and co/simplicial resolutions.}

\item The second right vertical arrow is a weak equivalence by \cite[Proposition 7.3]{Mandell}.  The second left vertical arrow is a weak equivalence by a similar argument; we modify the one given there as follows.

\begin{itemize}

\item We redefine $N_\bullet \in s\Set$ to be the simplicial replacement of the functor
\[ \left( ( \bW^c_\C )_{\dfibn x} \right)^{op} \times ( \bW^f_\D )_{y \dcofibn} \xra{\hom_{(\C^c+\D^f)}(-,-)} \Set . \]

\item The functor $\bD^{op} \xra{y_\bullet} (\bW^f_\D)_{y \dcofibn}$ is again homotopy right cofinal by \cref{similar to DKFunc 6.11}; the functor $\bD \xra{x^\bullet} (\bW^c_\C)_{\dfibn x}$ is again homotopy left cofinal by its dual.

\item We redefine $P_{\bullet \bullet} \in ss\Set$ analogously to how we redefined $N_\bullet \in s\Set$ (i.e.\! requiring the chosen objects of $(\bW_\C)_{\dfibn x}$ to be cofibrant and requiring the chosen objects of $(\bW_\D)_{y \dcofibn}$ to be fibrant).

\item Let us clarify why the asserted maps from $\diag(P_{\bullet \bullet})$ are weak equivalences in $s\Set_\KQ$.\footnote{We work in the modified situation of $(\C^c+\D^f)$, but the clarifications equally well clarify the argument given in the original proof of \cite[Proposition 7.3]{Mandell}; these clarifications actually come from private correspondence with Mandell regarding the original proof.}
\begin{itemize}

\item To see that the map $\diag(P_{\bullet \bullet}) \ra \Nerve \left( \Funp(\tilde{\word{3}},(\C^c+\D^f))^\bW \right)$ is a weak equivalence in $s\Set_\KQ$, let us define the object $\const \left(  \Nerve \left( \Funp(\tilde{\word{3}},(\C^c+\D^f))^\bW \right)  \right) \in ss\Set$ by precomposition with the projection $\bD^{op} \times \bD^{op} \ra \bD^{op}$ to the second factor.  This admits an evident map
\[ P_{\bullet \bullet} \ra \const \left(  \Nerve \left( \Funp(\tilde{\word{3}},(\C^c+\D^f))^\bW \right)  \right) \]
which yields the original map when we apply $\diag : ss\Set \ra s\Set$.  By \cite[Chapter IV, Proposition 1.9]{GJ} it suffices to show that this is a levelwise weak equivalence in $s\Set_\KQ$.  In level $n$, this is given by the map
\[ P_{\bullet ,n} \ra \Nerve \left( \Funp(\tilde{\word{3}},(\C^c+\D^f))^\bW \right)_n , \]
whose target is a discrete (i.e.\! constant) simplicial set.  Moreover, the fiber over any point of the target is the nerve of a category with an initial object, and hence it is weakly contractible in $s\Set_\KQ$.

\item To see that the map $\diag(P_{\bullet \bullet}) \ra N_\bullet$ is a weak equivalence in $s\Set_\KQ$, let us define $N_{\bullet \bullet} \in ss\Set$ by
\[ N_{m,n} = \coprod_{(\alpha,\beta) \in \hom_\Cat \left( [n],(\bW^c_\C)_{\dfibn x} \right) \times \hom_\Cat \left( [m],(\bW^f_\D)_{y \dcofibn} \right) } \hom_{(\C^c+\D^f)}(\alpha(n),\beta(0)) . \]
Note that this has $\diag(N_{\bullet \bullet}) \cong N_\bullet$, and moreover it admits an evident map $P_{\bullet \bullet} \ra N_{\bullet \bullet}$ which yields the original map when we apply $\diag : ss\Set \ra s\Set$.  Hence, again by \cite[Chapter IV, Proposition 1.9]{GJ}, it suffices to show that for each $n \geq 0$, the map $P_{n,\bullet} \ra N_{n,\bullet}$ is a weak equivalence in $s\Set_\KQ$.  In fact, it is not hard to see that this last map admits a section which defines a homotopy equivalence in $s\Set_\KQ$.

\end{itemize}

\end{itemize}

\item The third pair of vertical arrows are weak equivalences by \cref{special-3 to 3}.

\item The fourth pair of vertical arrows are weak equivalences by Propositions \ref{homotopical three-arrow calculus helps compute hammock sset} \and \ref{the two cases have homotopical three-arrow calculi}.

\end{itemize}
Thus, the map $\ham(\C^c+\D^f) \ra \cocart(\ham(F^c))$ is indeed a weak equivalence in $((\Cat_{s\Set})_{\Bergner})_{/[1]}$.
\end{proof}

\appendix

\section{A history of partial answers to \cref{main question}}\label{section history}

In this appendix, we survey the results that are either explicitly stated in the existing literature or can be extracted therefrom surrounding the question of providing external, homotopy-theoretic meaning to the notions of Quillen adjunctions and Quillen equivalences.

\subsection{Derived adjunctions and derived equivalences}

In \cite{QuillenHA}, Quillen proved the following results (which appear together as \cite[Chapter I, \sec 4, Theorem 3]{QuillenHA}).
\begin{itemize}

\item A Quillen adjunction induces a canonical adjunction between homotopy categories, called the \textit{derived adjunction} of the Quillen adjunction.

\item In the special case of a Quillen equivalence, the derived adjunction actually defines an equivalence of categories, called the \textit{derived equivalence} of the Quillen equivalence.

\end{itemize}

\subsection{Enhancements to $\ho(s\Set_\KQ)$-enriched categories}

In \cite{DKCalc}, Dwyer--Kan introduced their \textit{hammock localization} construction, which takes any relative category -- and hence in particular a model category -- and yields a category enriched in simplicial sets.  As $s\Set$-enriched categories provide a model for ``the homotopy theory of homotopy theories'', this laid the foundations for the following enhancements of Quillen's results that they proved (which appear together by combining \cite[Propositions 5.4 and 4.4]{DKFunc}).\footnote{In the statement of \cite[Proposition 5.4]{DKFunc}, we should be using a \textit{cosimplicial} resolution of the source and a \textit{simplicial} resolution of the target.}\footnote{The proof of \cite[Proposition 4.4]{DKFunc} contains a mistake, which is both explained and corrected in \cite{DuggerClass} and is corrected in \cite[\sec 7]{Mandell}.}
\begin{itemize}

\item A Quillen adjunction $F: \C \adjarr \D : G$ induces weak equivalences
\[ \hom_{\ham(\C)}(x,G(y)) \approx \hom_{\ham(\D)}(F(x),y) \]
in $s\Set_\KQ$ for every $x \in \C$ and every $y \in \D$.

\item A Quillen equivalence $F : \C \adjarr \D : G$ induces weak equivalences $\ham(F^c) : \ham(\C^c) \we \ham(\D^c)$ and $\ham(\C^f) \lwe \ham(\D^f) : \ham(G^f)$ in $(\Cat_{s\Set})_\Bergner$.  As illustrated in \cref{figure hexagon}, it follows that $\ham(\C)$ and $\ham(\D)$ are therefore weakly equivalent objects of $(\Cat_{s\Set})_\Bergner$.

\end{itemize}

Note that the first result does \textit{not} posit the existence of any sort of adjunction.  Indeed, this is a very subtle issue.  What we have so far is the diagram
\begin{figure}[h]
\[ \begin{tikzcd}
& \ham(\C^c) \arrow{r}{\ham(F^c)} & \ham(\D^c) \arrow[hook]{rd}[sloped, pos=0.4]{\approx} \\
\ham(\C) \arrow[hookleftarrow]{ru}[sloped, pos=0.6]{\approx} \arrow[hookleftarrow]{rd}[sloped, swap, pos=0.6]{\approx} & & & \ham(\D), \\
& \ham(\C^f) & \ham(\D^f) \arrow{l}{\ham(G^f)} \arrow[hook]{ru}[sloped, swap, pos=0.4]{\approx}
\end{tikzcd} \]
\caption{The hexagon in $(\Cat_{s\Set})_\Bergner$ arising from a Quillen adjunction.}\label{figure hexagon}
\end{figure}
in $(\Cat_{s\Set})_\Bergner$ of \cref{figure hexagon}, in which the fact that the indicated inclusions are weak equivalences follows from \cite[Proposition 5.2]{DKFunc} (or \cref{inclusion of co/fibrants induces BK weak equivalence}, see \cref{acknowledge DK}).

Now, a weak equivalence in $(\Cat_{s\Set})_\Bergner$ induces an equivalence of $\ho(s\Set_\KQ)$-enriched categories.\footnote{Equivalences of enriched categories are precisely the enriched functors which are essentially surjective on objects and induce isomorphisms on hom-objects (see \cite[\sec 1.11]{Kelly}).}  Hence, if we apply the ``enriched homotopy category'' functor $\hoenr : \Cat_{s\Set} \ra \Cat_{\ho(s\Set_\KQ)}$ to the above diagram, we can choose enriched inverse equivalences to the upper-left and lower-right inclusions, and then the upper and lower composites will respectively be candidates for the left and right adjoints of a $\ho(s\Set_\KQ)$-enriched adjunction between $\hoenr(\ham(\C))$ and $\hoenr(\ham(\D))$.

However, things are still not so clean as this.  The weak equivalences between corresponding hom-objects in $\ham(\C)$ and $\ham(\D)$ pass through the co/simplicial resolutions of \cite[4.3]{DKFunc}, and apparently nowhere in the literature are these shown to give \textit{functorially} weakly equivalent simplicial sets to the hom-objects in the hammock localizations, at least not in full generality.  In fact, the main purpose of \cite{LowFunc} is to show that these weak equivalences are indeed functorial (in $\ho(s\Set_\KQ)$) when the model category admits functorial factorizations (although Low mentions in that paper that he intends to return to the general case in future work).

But even if these weak equivalences were shown to be functorial, we still would not immediately obtain a $\ho(s\Set_\KQ)$-enriched adjunction.  Rather, we would need to choose our enriched inverse equivalences to be enriched \textit{adjoint} equivalences, in order to select preferred and functorial isomorphisms in $\hoenr(\ham(\C))$ and $\hoenr(\ham(\D))$ (via the unit or counit) between objects and their images under the retractions.\footnote{Adjoint equivalences would be guaranteed by the existence of functorial factorizations in $\C$ and $\D$ (or even just functorial cofibrant replacement in $\C$ and functorial fibrant replacement in $\D$), but such assumptions are unnecessary since we are ultimately only working at the $\ho(s\Set_\KQ)$-enriched level anyways: just as in ordinary category theory, an enriched functor is an enriched equivalence if and only if it admits an enriched adjoint equivalence (again see \cite[\sec 1.11]{Kelly}).}

\subsection{Enhancements to quasicategories}

It has already been established in the literature that certain Quillen adjunctions satisfying additional hypotheses induce adjunctions of quasicategories.

\subsubsection{Model categories with functorial replacements}

Lurie proves as \cite[Proposition 5.2.2.8]{LurieHTT} that a pair of functors between quasicategories are adjoints if and only if there exists a ``unit transformation'' with the expected behavior at the level of $\ho(s\Set_\KQ)$-enriched homotopy categories (see \cite[Definition 5.2.2.7]{LurieHTT}).

However, it is a subtle matter to obtain such a unit transformation.  Note that a Quillen adjunction $F : \C \adjarr \D : G$ gives rise to a unit transformation $\id_\C \ra GF$ of endofunctors on the underlying category $\C$, but its target $GF$ will not generally be a relative endofunctor.  The standard fix is to take cofibrant replacements in $\C$ before applying $F$ and fibrant replacements in $\D$ before applying $G$.  Of course, in order to obtain a unit transformation, these replacements must be functorial.  Let us assume we are in the usual situation in which such replacement functors exist, namely that they are obtained as special cases of functorial factorizations; we denote them by $\bbQ^\C : \C \ra \C^c \hookra \C$ and $\bbR^\D : \D \ra \D^f \hookra \D$, and we denote their corresponding replacement transformations by $\bbQ^\C \xra{q^\C} \id_\C$ and $\id_\D \xra{r^\D} \bbR^\D$.

Now, we are interested in obtaining a unit map for the relative endofunctor $G \bbR^\D F \bbQ^\C$ on $(\C,\bW_\C)$, at least at the level of its underlying quasicategory.  The first thing to note here is that we cannot proceed by passing through hammock localizations, since the functor $\ham : \RelCat \ra \Cat_{s\Set}$ does not preserve natural transformations.\footnote{Rather, given $\C_1,\C_2 \in \RelCat$ and a morphism $F_1 \ra F_2$ in $\Fun(\C_1,\C_2)^\bW$, for any $x, y\in \C_1$ we obtain a natural cospan of weak equivalences $\hom_{\ham(\C_2)}(F_1(x),F_1(y)) \we \hom_{\ham(\C_2)}(F_1(x),F_2(y)) \lwe \hom_{\hom(\C_2)}(F_2(x),F_2(y))$ in $s\Set_\KQ$, and combining this with the span $\hom_{\ham(\C_2)}(F_1(x),F_1(y)) \la \hom_{\ham(\C_1)}(x,y) \ra \hom_{\ham(\C_2)}(F_2(x),F_2(y))$ yields a square which commutes up to a specified homotopy (see \cite[Propositions 3.5 and 3.3]{DKCalc}).}  On the other hand, the relative functor $\NerveRezk : \RelCat \ra ss\Set_\Rezk$ preserves products (being pointwise corepresented), and from this it is not hard to see that it preserves natural transformations and takes natural weak equivalences to natural equivalences (in the evident internal sense in $ss\Set_\Rezk$); since the model category $ss\Set_\Rezk$ is compatibly cartesian closed (see \cite[Theorem 7.2]{RezkCSS}) and all its objects are cofibrant, we view this as an acceptable substitute.  Hence, up to the contractible ambiguity in the various functors between models for ``the homotopy theory of homotopy theories'', we may consider a natural transformation or natural weak equivalence between relative functors between relative categories as giving natural transformations and natural equivalences between the corresponding functors between their underlying quasicategories.

From here, the most direct way to proceed would be to obtain the unit map from the natural zigzag
\[ x \xla[\approx]{q^\C_x} \bbQ^\C(x) \xra{\eta_{\bbQ^\C(x)}} G \left( F \left( \bbQ^\C(x) \right) \right) \xra{G\left( r^\D_{F\left( \bbQ^\C(x) \right)} \right)} G \left( \bbR^\D \left( F \left( \bbQ^\C(x) \right) \right) \right) \]
in $\C$ in which the backwards arrow is a weak equivalence; assembling these across all $x \in \C$, we obtain a span
\[ \id_\C \xla[\approx]{q^\C} \bbQ^\C \xra{G \left( r^\D \right) \circ \eta} G \bbR^\D F \bbQ^\C \]
between relative endofunctors on $(\C,\bW_\C)$ in which the backwards arrow is a natural weak equivalence.  Passing through $ss\Set_\Rezk$ as discussed above (and implicitly identifying the two different ways of passing from $\RelCat_\BarKan$ to $s\Set_\Joyal$), we obtain a span
\[ \id_{\uq(\C)} \xla[\sim]{\uq \left( q^\C \right)} \uq \left( \bbQ^\C \right) \xra{\uq \left( G \left( r^\D \right) \circ \eta \right)} \uq \left( G \bbR^\D F \bbQ^\C \right) \]
in which the backwards arrow is an equivalence.\footnote{Note that we are now working \textit{internally} to a quasicategory, namely the quasicategory of endofunctors of $\uq(\C)$.}  Hence, we can obtain a candidate unit transformation $\id_{\uq(\C)} \ra \uq \left( G \bbR^\D F \bbQ^\C \right)$, which one might then hope to verify satisfies the hypotheses of \cite[Definition 5.2.2.7]{LurieHTT} using e.g.\! the co/simplicial resolutions of \cite{DKFunc}.  Of course, this requires knowing that the hom-objects obtained from co/simplicial resolutions are indeed \textit{functorially} weakly equivalent to the hom-objects in the hammock localizations, but at least this follows from \cite{LowFunc} in the case that $\C$ and $\D$ both admit functorial factorizations, as mentioned above.

\begin{rem}
This approach would also work if the cofibrant replacement functor $\bbQ^\C : \C \ra \C$ were augmented (instead of coaugmented), and in fact we would also obtain a candidate unit transformation if the fibrant replacement functor $\bbR^\D : \D \ra \D$ were coaugmented (instead of augmented).  On the other hand, because of the way model categories are set up, it seems that such replacement functors do not arise very frequently in practice.
\end{rem}

\begin{rem}
Of course, if all objects of $\C$ are cofibrant then the identity functor can serve as a cofibrant replacement functor; a dual observation holds for $\D$.
\end{rem}

\begin{rem}
Actually, slightly more cleverly, we can use a similar argument to the one given above to obtain a natural transformation between the standard inclusion $\C^c \hookra \C$ and the composite $\C^c \xra{F^c} \D^c \hookra \D \xra{\bbR^\D} \D^f \xra{G} \C$; this yields a natural transformation between functors from $\uq(\C^c)$ to $\uq(\C)$, and (horizontally) precomposing with an inverse to the equivalence $\uq(\C^c) \xra{\sim} \uq(\C)$ yields a candidate unit transformation, all without requiring that $\C$ admit any sort of cofibrant replacement functor.  Dually, one can obtain a candidate counit transformation if one assumes that $\C$ has a cofibrant replacement functor but without assuming that $\D$ admit any sort of fibrant replacement functor.
\end{rem}

\begin{rem}
Instead of assuming the existence of any appropriate co/fibrant replacement functors, one might alternatively extract inverse equivalences $\uq(\C) \xra{\sim} \uq(\C^c)$ and $\uq(\D) \xra{\sim} \uq(\D^f)$ at the level of underlying quasicategories.  However, it appears that the original adjunction $F \adj G$ will be entirely lost by this point, and hence that one cannot hope to provide the desired unit transformation in full generality using this approach.
\end{rem}

\subsubsection{Simplicial model categories}

Dwyer--Kan prove that given a simplicial model category $\C_\bullet$, the two possible notions of ``underlying homotopy theory'' agree: the full $s\Set$-enriched subcategory $\C^{cf}_\bullet$ of bifibrant objects is equivalent (via a zigzag of weak equivalences in $(\Cat_{s\Set})_\Bergner$) to the hammock localization $\ham(\C)$ of the underlying model category (see \cite[Proposition 4.8]{DKFunc}).\footnote{Note that in the statement of \cite[Proposition 4.8]{DKFunc}, the right arrow should also be labeled as a weak equivalence in $(\Cat_{s\Set})_\Bergner$, there called simply a ``weak equivalence''.}  This paved the way for the following enhancement of their results.

First of all, Lurie proves as \cite[Proposition 5.2.4.6]{LurieHTT} -- and Riehl--Verity later re-prove as \cite[Theorem 6.2.1]{RV} -- that a simplicial Quillen adjunction of simplicial model categories $F_\bullet : \C_\bullet \adjarr \D_\bullet : G_\bullet$ (that is, an enriched adjunction in $\Cat_{s\Set}$ which is moreover a Quillen adjunction on underlying model categories) induces an adjunction between the quasicategories $\Nervehc(\C_\bullet^{cf})$ and $\Nervehc(\D_\bullet^{cf})$.\footnote{Neither of these sources defines model categories to come equipped with functorial factorizations, although Lurie assumes bicompleteness (whereas Riehl--Verity only assume finite bicompleteness).  On the other hand, Lurie's proof readily adapts to the more general case.}  (Note that the objects $\C_\bullet^{cf}, \D_\bullet^{cf} \in (\Cat_{s\Set})_\Bergner$ are already fibrant, and hence do not require fibrant replacement.)

Moreover, there are various results concerning replacing model categories and Quillen equivalences by simplicial ones.

\begin{itemize}

\item In \cite{DuggerSimp}, Dugger shows that a model category which is left proper and is additionally either cellular or combinatorial admits a left Quillen equivalence to a simplicial model category (see \cite[Theorem 1.2 or 6.1]{DuggerSimp}).

\item In \cite{RSS}, Rezk--Schwede--Shipley work with model categories that are left proper, cofibrantly generated (under a slightly stronger definition than the usual one, see \cite[Definition 8.1]{RSS}), and satisfy their ``realization axiom'' (see \cite[Axiom 3.4]{RSS}), and prove

\begin{itemize}

\item that every such model category admits a left Quillen equivalence to a simplicial model category (see \cite[Theorem 3.6]{RSS}), and

\item that a Quillen adjunction between such model categories induces a simplicial Quillen adjunction between their replacements by simplicial model categories (see \cite[Proposition 6.1]{RSS}).

\end{itemize}

\end{itemize}
Whenever these results can be used to upgrade a Quillen adjunction to a simplicial Quillen adjunction (see \cite[\sec A]{BlumbergRiehl} for an expanded summary of these techniques), then by combining Lurie's result with the Dwyer--Kan result cited earlier (that Quillen equivalences induce weak equivalences in $(\Cat_{s\Set})_\Bergner$), we obtain from the original Quillen adjunction an adjunction of underlying quasicategories.

\bibliographystyle{amsalpha}
\bibliography{adjns}{}

\end{document}